\newtheorem{definition}{Definition}
\newtheorem{theorem}{Theorem}
\newtheorem{corollary}{Corollary}
\begin{document}
\begin{frontmatter}

\title{A global constraint for the capacitated single-item lot-sizing problem}

\author[inpg]{Grigori German  }
\ead{grigori.german@grenoble-inp.fr}

\author[inpg]{Hadrien Cambazard  }
\ead{hadrien.cambazard@grenoble-inp.fr}

\author[uca]{ Jean-Philippe Gayon}
\ead{jean-philippe.gayon@grenoble-inp.fr}

\author[inpg]{Bernard Penz  }
\ead{bernard.penz@grenoble-inp.fr}


\address[inpg]{Univ. Grenoble Alpes, CNRS, Grenoble INP, G-SCOP, 38000 Grenoble, France}
\address[uca]{LIMOS, Universite Clermont Auvergne, France}

\begin{abstract}
The goal of this paper is to set a constraint programming  framework to  solve lot-sizing problems. More specifically, we consider a single-item lot-sizing problem with time-varying lower and upper bounds for  production and inventory. The cost structure includes time-varying holding costs, unitary production costs and setup costs. We establish a new lower bound for this problem by using a subtle time decomposition. We formulate this NP-hard problem as a global constraint and show that bound consistency can be achieved in pseudo-polynomial time and when not including the costs, in polynomial time.  We develop filtering rules based on existing dynamic programming algorithms, exploiting the above mentioned time decomposition for difficult instances. In a numerical study, we compare several formulations of the problem: mixed integer linear programming, constraint programming and dynamic programming. We show that our global constraint is able to find solutions, unlike the decomposed constraint programming model and that constraint programming can be competitive, in particular when adding combinatorial side constraints.
\end{abstract}
\begin{keyword}
lot-sizing \sep constraint programming \sep  global constraint 
\end{keyword}  

\end{frontmatter}

\section{Introduction}

The field of production planning addresses numerous complex problems covered by operations research and combinatorial optimization. In particular, lot-sizing problems have been broadly studied. The core problem \cite{wagner1958} and several variants have been solved by Dynamic Programming (DP) in polynomial time. Other variants (e.g. time varying production capacity and setup costs, multi-echelon) are NP-hard and are most of the time dealt with Mixed Integer Linear Programming (MILP) formulations (see e.g. \cite{pochet2006production,baranyMIC1984}). 

State-of-the-art approaches for complex lot-sizing problems are currently based on polyhedral techniques such as cutting plane algorithms and can handle a large class of problems with side-constraints. Nonetheless theses techniques may eventually fail when facing combinatorial additional constraints. In this paper, we investigate alternative generic approaches based on combinatorial techniques and designed within the Constraint Programming (CP) framework. The rationale is that a lot of algorithmic results have been obtained on the fundamental problems in this field over the last sixty years. We propose to reuse them as filtering mechanisms and building blocks of a generic solver for lot-sizing. This paper is a first step in that direction: we introduce a new global constraint \textsc{LotSizing} embedding the single-item lot-sizing problem. \textsc{LotSizing} appears to be especially generic and suits well in the modeling of a great variety of lot-sizing problems. The problem being NP-hard, we prove several complexity results on achieving different consistency levels for the constraint. We use a time decomposition to propose a new lower bound for the single-item lot-sizing problem. This time decomposition combined with classical results, namely DP algorithms, enables us to derive interesting cost-based filtering algorithms for \textsc{LotSizing}.

\paragraph{The capacitated single-item lot-sizing problem}
In this paper, we focus on the following single-item  lot-sizing problem  -- denoted by $(L)$  -- which is used as a building block to tackle more complex lot-sizing problems. The objective is to plan the production of a single product over a finite horizon of $T$ periods $\llbracket 1, T \rrbracket$ in order to satisfy a demand $d_t$ at each period $t$, and to minimize the total cost. The (per unit) production cost at $t$ is $p_t$ and a setup cost $s_t$ is paid if at least one unit is produced at $t$. A holding cost $h_t$ is paid for each unit stored at the end of period $t$. Furthermore the production (resp. the inventory) is bounded by minimal and maximal capacities $\underline{\alpha_t}$ and $\overline{\alpha_t}$ (resp. $\underline{\beta_t}$ and $\overline{\beta_t}$) at each period $t$.

Figure \ref{ULS} shows the problem as a graph with the variables and parameters on each arc. For each period, the incoming arcs corresponds to the possible production (vertical arcs) and inventory from the previous period (horizontal arcs). The outgoing arcs correspond to the demand (vertical arcs) and inventory at the end of the period (horizontal arcs). 
\begin{figure}[H]
\centering
   \includegraphics[scale = 0.55]{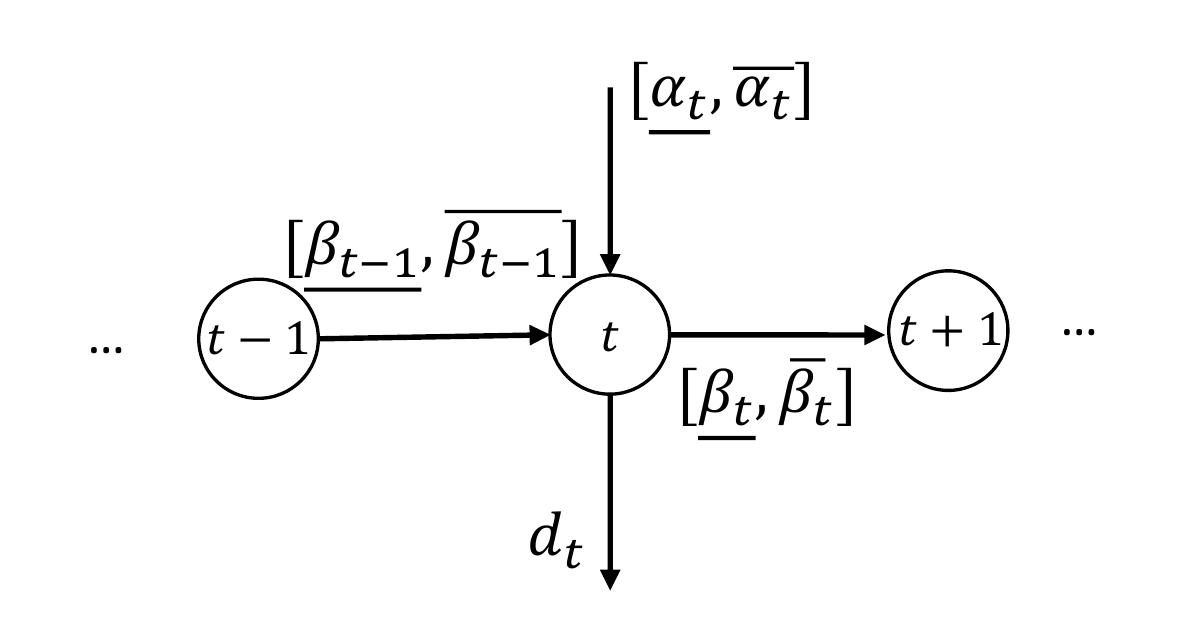}
   \caption{\label{ULS} Flow representation of the single-item lot-sizing problem}
\end{figure}

In the literature, one can find several models with upper bounds on either production or inventory. It is however unusual to include lower bounds. We make this assumption to be consistent with the CP framework that states domains for the variables.

\paragraph{Literature review}

The CP literature is very limited in the field of lot-sizing problems. To the best of our knowledge, \cite{stockingcost2014} is the only paper to study a lot-sizing-related global constraint. They consider a production planning problem in which a set of items has to be produced before their production deadline on a limited capacity machine, with the objective of minimizing stocking costs. This problem can be solved in polynomial time and is a special case of $(L)$ where production costs are set to zero ($p_t = s_t = 0, h_t=1$), the production and inventory lower bounds are set to zero ($\underline{\alpha_t} =\underline{\beta_t}=0$), the production upper bound is constant and there is no inventory upper bound ($\overline{\alpha_t} = \alpha, \overline{\beta_t}=+\infty$). It can be seen as a scheduling problem with deadlines and the objective of minimizing the total earliness ($P|\tilde{d}_j, p_j = 1|\sum E_j$ with Graham notation). In their approach, a decision variable is associated to each item and specifies in which period the item has to be produced.  This approach is suitable to deal with scheduling problems but seems less relevant to address lot-sizing problems for which large quantities of the same item can be produced in the same period. In \cite{houndji2019item}, the authors extend their global constraint to varying production capacities and stocking costs. Note that CP solvers have been used in the past to solve lot-sizing problems (see e.g. \cite{tarim2004echelon} for a distribution multi-echelon system).

We now focus the literature review on some special cases of problem $(L)$. There is no paper, to our knowledge, that considers lower bounds on both production and inventory -- see \cite{loveBI1973,van2008four} that consider inventory lower bounds only.
\cite{wagner1958} shows that the uncapacitated problem($\overline{\alpha_t}=\overline{\beta_t}=+\infty$) can be solved by DP in $O(T^2)$. This complexity has later been improved to $O(T \log T)$~\cite{federgruen1991simple,wagelmans1992economic,aggarwal1993improved}. When adding a constant production capacity and a constant setup cost, ($s_t = s$, $\overline{\alpha_t} = \alpha$), the problem can be solved in $O(T^4)$ with concave  costs \cite{florian1971deterministic} and in $O(T^3)$ with linear  costs  \cite{van1996t}. When the production capacity varies with time, the problem is NP-hard \cite{bitranCLSP1982}. Note that when $p_t = h_t = 0$, $(L)$ is equivalent to a knapsack problem. With time-varying inventory capacities, the problem can be solved in $O(T^2)$ with production and inventory setup costs \cite{loveBI1973,atamturk2008n2}. Finally \cite{atamturk2004study} is likely the only theoretical paper that studied the single-item problem with general capacities.

The rest of the paper is organized as follows. Section \ref{lsDescription} presents algorithms from the literature that will be re-used later. Section \ref{wisp} presents a new lower bound for this problem based on a time decomposition. Section \ref{LSglobalCst} presents the \textsc{LotSizing} global constraint and states complexity results for achieving bound and range consistency. Section \ref{filteringLS} presents cost-based filtering mechanisms for \textsc{LotSizing}. Section \ref{numResultsLS} compares numerically the performances of \textsc{LotSizing} with two MILP formulations, DP and a basic CP model. Section \ref{numResultsAC} considers two extensions with side constraints.

\section{Preliminaries}
\label{lsDescription}

This section presents classical MILP formulations and DP approaches, that will be used later in the paper. We also show that problem $(L)$ is equivalent to a problem without lower bounds on production and inventory.

\subsection{MILP formulations}
We list below a summary of the main notations.

\vspace{+5px}
\noindent\textbf{Parameters}
\vspace{-5px}
\begin{itemize}
	\item $T \in \mathbb{N}$: Number of periods.
	\item $p_t \in \mathbb{N}$: Unit production cost at $t$.
	\item $h_t \in \mathbb{N}$: Unit holding cost at $t$ (applied to the ending inventory).
	\item $s_t \in \mathbb{N}$: Setup cost at $t$ (paid if at least one item is produced at $t$).
	\item $d_t \in \mathbb{N}$: Demand at $t$.
	\item $\underline{\alpha_t}, \overline{\alpha_t} \in \mathbb{N}$: Minimal and maximal production quantities at $t$.
	\item $\underline{\beta_t}, \overline{\beta_t} \in \mathbb{N}$: Minimal and maximal inventory at the end of period $t$. 
	\item $I_0  \in \mathbb{N}$: Initial inventory.
\end{itemize}

\noindent \textbf{Variables}
\vspace{-5px}
\begin{itemize}
	\item $X_t \in \mathbb{N}$: Quantity produced at $t$.
	\item $Y_t \in \{0, 1\}$: Setup variable that equals $1$ if at least one item is produced at $t$.
	\item $I_t \in \mathbb{N}$: Inventory at the end of period $t$.
	\item $C \in \mathbb{N}$: Total cost.
	\item $\mathit{Cp} \in \mathbb{N}$: Sum of production costs.
	\item $\mathit{Cs} \in \mathbb{N}$: Sum of setup costs.
	\item $\mathit{Ch} \in \mathbb{N}$: Sum of holding costs.
\end{itemize}
We assume that parameters and domains are integers. We denote by $X$, $I$ and $Y$ the vectors $\langle X_1, \ldots, X_T \rangle$, $\langle I_1, \ldots, I_T \rangle$ and $\langle Y_1, \ldots, Y_T \rangle$. Without loss of generality we consider $I_0=0$. We also consider $I_T=0$. Indeed, we can compute the minimum mandatory quantity to store at the end of period $T$ from the production and inventory capacity constraints. If this quantity $q$ is strictly positive, we add a dummy period $T+1$ at the end of the time horizon with $p_{T+1} = h_{T+1} = 0$, $\overline{\alpha_{T+1}} = \overline{\beta_{T+1}} = 0$ and $d_{T+1} = q$.

\label{MILP}
Problem $(L)$ can be formulated as an aggregated MILP model (see e.g. \cite{pochet2006production}):
\begin{align}
&& \textit{minimize} \: \: C &= \mathit{Cp} + \mathit{Ch} + \mathit{Cs} \label{objMILP1}\\
&&I_{t-1} + X_{t} &= d_{t} + I_{t} && \forall \: t = 1 \ldots T \label{flotMILP1}\\
&&X_{t} &\leq \overline{\alpha_t} Y_t  && \forall \: t = 1 \ldots T \label{setupMILP1}\\
&&\mathit{Cp} &= \sum_{t=1}^{T} {p_{t}X_{t}} \label{CpMILP1}\\
(MILP\_AGG)&&\mathit{Ch} &= \sum_{t=1}^{T} {h_{t}I_{t}} \label{ChMILP1}\\
&&\mathit{Cs} &= \sum_{t=1}^{T} {s_{t}Y_t} \label{CsMILP1}\\
&&X_t &\in \{\underline{\alpha_t}, \ldots, \overline{\alpha_t}\} && \forall \: t = 1 \ldots T \label{domXMILP1}\\
&&I_t &\in \{\underline{\beta_t}, \ldots, \overline{\beta_t}\} && \forall \: t = 1 \ldots T \label{domIMILP1}\\
&&Y_t &\in \{0,1\} && \forall \: t = 1 \ldots T \label{domYMILP1}
\end{align}
where \eqref{flotMILP1} are the flow balance constraints for each period and \eqref{setupMILP1} are the setup constraints enforcing $Y_t$ to $1$ if a production is made at $t$. 
Finally, \eqref{CpMILP1}, \eqref{ChMILP1} and \eqref{CsMILP1} express the various costs. When $(L)$ is solved as a MILP, the variables $X$ and $I$ can be relaxed and considered real \cite{pochet2006production}.


$(L)$ can also be modeled as a facility location problem \cite{krarup1977plant}: the variables $I$ and $X$ are channeled to the variables $X_{tr}$, where $X_{tr}$ represents the proportion of demand $d_r$ produced in period $t$ and stored from $t$ to $r$. The model can be written as follows:
\begin{align}
&&\eqref{objMILP1}, \eqref{CpMILP1}&, \eqref{ChMILP1}, \eqref{CsMILP1}, \eqref{domXMILP1}, \eqref{domIMILP1}, \eqref{domYMILP1} && \nonumber \\
&&X_t &= \sum_{r = t}^{T}{ d_{r} X_{tr}} && \forall \: t = 1 \ldots T \label{defXMILP2}\\
(MILP\_UFL) &&I_t &= \sum_{q = 1}^{t}{ \sum_{r = t+1}^{T}{d_{r} X_{qr}}} && \forall \: t = 1 \ldots T \label{defIMILP2}\\
&&X_{tr} &\leq Y_{t} && \forall \: t = 1 \ldots T,\: r = t \ldots T \label{setupMILP2}\\
&&\sum_{t = 1}^{r}{X_{tr}} &= 1 && \forall \: r = 1 \ldots T \label{satisDtMILP2}\\
&&X_{tr} &\in [0,1] && \forall \: t = 1 \ldots T,\: r = t \ldots T \label{domXMILP2}
\end{align}
Though the number of variables is increased, this model has the advantage to tighten the \textit{big M constraints} \eqref{setupMILP1} of the first formulation by stating constraints \eqref{setupMILP2} and is known to provide an at least as good lower bound as the linear relaxation. Note that the shortest path reformulation (and $(l,S)$ inequalities) would provide an equivalent lower bound to UFL \cite{pochet2006production}.


\subsection{Linear relaxation}
\label{RLeqFLot}
Solving the linear relaxation of MILP\_AGG (i.e. $Y_t \in [0,1], \forall \: t \in \llbracket 1, T \rrbracket$) is equivalent to a minimum cost network flow problem~\cite{ahuja1993network}. The graph of this flow is presented in Figure \ref{RLeqFLotFig}. On each arc, $(u, c)$ represents the capacity ($u$) and unitary cost ($c$) of the arc. The units flow from  source node $S$ to  sink node $W$. On each production arc ($S$, $t$), the capacity is the production capacity and the cost is $p'_t = p_t + \dfrac{s_t}{\overline{\alpha_t}}$. On each inventory arc ($t$, $t+1$), the capacity is the inventory capacity and the cost is $h_t$. Finally on each demand arc ($t$, $W$), there must be exactly $d_t$ units and the unitary cost is $0$.

\begin{figure}[!h]
\centering
   \includegraphics[scale = 0.4]{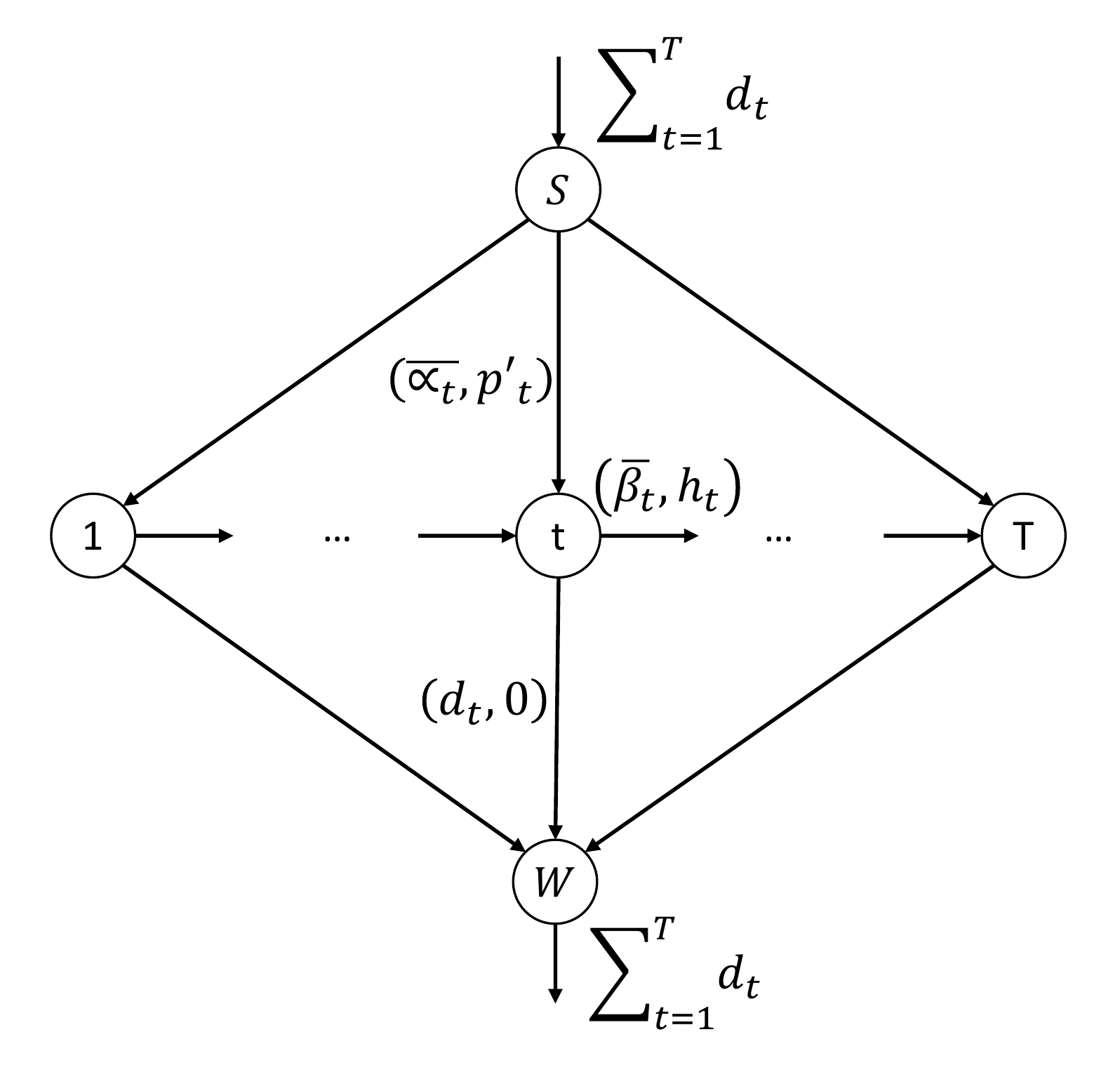}
   \caption{\label{RLeqFLotFig} The linear relaxation of MILP\_AGG is a minimum cost network flow problem}
\end{figure}

The flow problem can be solved in $O(T^2)$ with the successive shortest path algorithm \cite{ahuja1993network}.

\subsection{An equivalent problem without lower bounds }
\label{transformation}

In this subsection, we show that problem $(L)$ is equivalent to a problem without lower bounds. It will allow us to re-use several classical lot-sizing algorithms and will also simplify the presentation of some results.

Solving a maximum flow problem on a network with lower bounds on flows is equivalent to solving a maximum flow problem on a transformed network without lower bounds as shown in \cite{ahuja1993network}. We denote by $(L')$ the resulting problem of that transformation applied to $(L)$ slightly adapted to take into account setup costs. The parameters of $(L')$ are:
\begin{align*}
	& \underline{X'_t} = 0 \: \text{ and } \: \overline{X'_t} = \overline{X_t} - \underline{X_t} \\
	& \underline{I'_t} = 0 \: \text{ and } \: \overline{I'_t} = \overline{I_t} - \underline{I_t} \\
	& p'_t=p_t \\
	& h'_t=h_t \\
	& s'_t = \left\{
			\begin{array}{rl}
  				0  &\text{if}\: \underline{X_t}>0\\
				s_t  &\text{otherwise}
			\end{array}
		\right. \\
    & d'_t = d_t + \underline{I_{t}} - \underline{X_t} - \underline{I_{t-1}} 
\end{align*}

$(X,I)$ is a solution of $(L)$ if and only if $(X', I')$ is a solution of $(L')$. The intuition is that the production and inventory lower bounds are considered as mandatory quantities. As these quantities must be produced/stored at a precise period, no decisions have to be made about them and thus they can be removed from the problem. Note that the demands are also affected by the transformation.

The  mandatory costs associated to the lower bounds  are:
  \begin{align*}
{\mathit{Cp}_{min}} &= \sum_{t=1}^{T}{p_t \: \underline{X_t}}, \\
{\mathit{Ch}_{min}} &= \sum_{t=1}^{T}{h_t \: \underline{I_t}}, \\
{\mathit{Cs}_{min}} &= \sum_{t=1}^{T}{s_t \: \mathds{1}_{\underline{X_t} > 0}}, \\
{\mathit{C}_{min}} &= {\mathit{Cp}_{min}} + {\mathit{Ch}_{min}} + {\mathit{Cs}_{min}}.
  \end{align*}
and the variables of $(L)$ and $(L')$ are linked  as follows:
  \begin{align*}
{X_t} &= {X_t'} + \underline{X_t},\\
{I_t} &= {I_t'} + \underline{I_t},\\
\mathit{Cp} &=  \mathit{Cp'} + \mathit{Cp}_{min},\\
\mathit{Ch} &=  \mathit{Ch'} + \mathit{Ch}_{min},\\
\mathit{Cs} &=  \mathit{Cs'} + \mathit{Cs}_{min},\\
\mathit{C} &=  \mathit{C'} + \mathit{C}_{min}.
  \end{align*}

Note that if the final demand $d'_t$ is negative then $d_t+I_{t} < X_t + I_{t-1}$. This cannot be if constraints $\{(2), (3), (7), (8), (9)\}$ -- which correspond to the feasibility of (L) -- are bound consistent. We will show later in this paper (in section \ref{filteringLS}) that we can assume this property when removing the lower bounds.

\subsection{Dynamic programming}\label{DP}

$(L)$ can also be solved via DP \cite{florian1971deterministic}. We provide here the algorithm without lower bounds on production and inventory. The algorithm (called DPLS in the paper) iterates over the inventory levels. We denote $\textsl{f} \: (t, I_t)$ as the minimum cost for producing the demands from $d_1$ to $d_t$ knowing that the stock level at $t$ is $I_t$:
\begin{align}
&\forall \: t \in \llbracket 1, T\rrbracket \text{ and } \forall \: I_t \in \llbracket 0, \overline{\beta_t}\rrbracket \nonumber \\
&\textsl{f} \: (t, I_t) = \min_{I_{t-1} = a \ldots b} \: \{\textsl{f} \: (t-1, I_{t-1})+ \mathds{1}_{X_t > 0} s_t + p_t X_t + h_t I_t\} \label{DPeq}
\end{align}
where $a = \max{\{0, d_t + I_t - \overline{\alpha_t}\}}$, $b = \min{\{\overline{\beta_{t-1}},  d_t + I_t}\}$ and $X_t = I_t + d_t - I_{t-1}$. We define $I_{max} = \max \: \{\overline{\beta_t}\: | \: t \in \llbracket 1, T\rrbracket\}$. The initial states are $ \textsl{f} \: (0, 0) = 0  \text{ and } \forall \: I_t \in \llbracket 1, I_{max}\rrbracket \:, \: \textsl{f} \: (0, I_t) = + \infty$. The value $\textsl{f} \: (T, 0)$ gives the optimal cost of $(L)$. This dynamic programming algorithm runs in pseudo-polynomial time $O(T  I_{max}^2)$.

Note that DPLS consists in finding a shortest path in the graph for which there is a node for each inventory level at each period. The cost on an arc between two nodes $(t, I_t)$ and $(t+1, I_{t+1})$ corresponds to the cost for satisfying demand $d_t$ and having an inventory level $I_{t+1}$ at the end of period $t+1$ knowing that there was an inventory level $I_t$ at the end of period $t$.

DPLS is a DP algorithm referred to as "forward" since it considers the periods in chronological order. We can also write the reverse (or "backward") DPLS. Let $\textsl{f}_{r} (t, I_t)$ be the minimum cost for producing the demands from $d_{t+1}$ to $d_T$ knowing that the stock level at $t$ is $I_t$:
\begin{align*}
&\forall \: t \in \llbracket 0, T-1 \rrbracket \text{ and } \forall \: I_t \in \llbracket 0, \overline{\beta_t} \rrbracket \\
&\textsl{f}_{r} (t, I_t) = \min_{I_{t+1} = c \ldots d} \{ \textsl{f}_{r} (t+1, I_{t+1}) + \mathds{1}_{X_{t+1} > 0} s_{t+1} + p_{t+1} X_{t+1} + h_{t+1} I_{t+1}\}
\end{align*}
where $c = \max{\{0,I_t - d_{t+1}\}}$, $d =  \min{\{\overline{\beta_{t+1}},I_t - d_{t+1}+\overline{\alpha_{t+1}}\}}$ and $X_{t+1} = d_{t+1}+I_{t+1}-I_t$. The initial states are $\textsl{f}_{r} (T, 0) = 0$ and $\forall \: I_t \in \llbracket 1, I_{max} \rrbracket \:, \: \textsl{f}_{r}(T, I_t) = + \infty$. 
As described above, $\textsl{f}_{r} (t, I_t)$ can be seen as the shortest path from the node $(t, I_t)$ to the node $(T,0)$. The value $\textsl{f}_{r} \: (0, 0)$ gives the optimal cost of $(L)$.

\section{A new lower bound for the single-item lot-sizing} 
\label{wisp}

In this section, we present a new lower bound for the total cost $C$ and how it can be adapted for the setup cost $C_s$. The general idea is to decompose $(L)$ into sub-problems, then to compute a lower bound on each of these sub-problems and finally combine them at best to find a global lower bound. We suppose here that $\underline{\alpha_t} = \underline{\beta_t} = 0, \: \forall \: t \in \llbracket 1, T\rrbracket$. This assumption is not restrictive as production and inventory lower bounds can be easily removed in $(L)$ as shown in \ref{transformation}.

\subsection{Lot-sizing sub-problem}
A sub-problem $(L_{u,v})$, with $u < v$, is defined exactly as $(L)$ except that:

\begin{align*}
&d_t = 0 &\forall \: t &\notin \llbracket u, v\rrbracket\\
&s_t = 0 &\forall \: t &\notin \llbracket u, v\rrbracket
\end{align*}

The cost variables  of $(L_{u,v})$ are denoted $C^{uv}$, $C_{p}^{uv}$, $C_{h}^{uv}$ and $C_{s}^{uv}$, corresponding to the total cost, sum of production costs, sum of holding costs and sum of setup costs of $(L_{u,v})$. Figure \ref{SubProblem} illustrates the data used in sub-problem $(L_{u,v})$.
\begin{figure}[!h]
\centering
   \includegraphics[scale = 0.52]{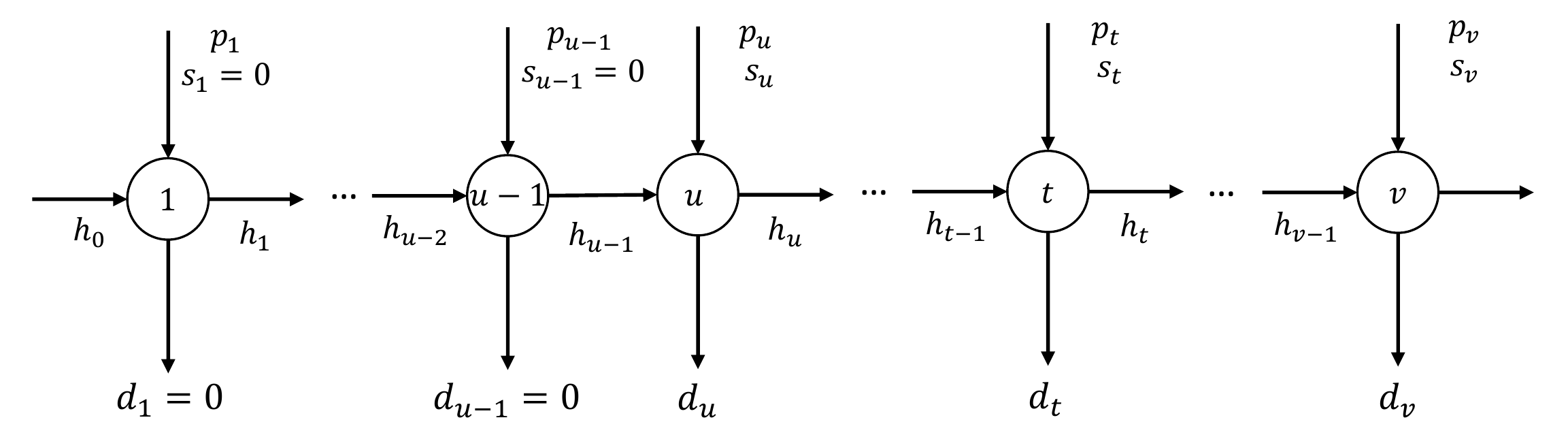}
   \caption{\label{SubProblem} Sub-problem $(L_{u,v})$}
\end{figure}

Sub-problem $(L_{1T})$ corresponds to the entire problem $(L)$.  As there is no demand after period $v$ and no lower bounds of production, any solution of $(L_{u,v})$ is dominated by a solution with null inventory at the end of period $v$ ($I_v = 0$). Note also that, in $(L_{u,v})$, some demands in $\{d_u, d_{u+1}, \ldots, d_v\}$ can be satisfied by a production made without setup cost before period $u$. Finally, an optimal solution of $(L_{u,v})$ provides a lower bound of the cost for satisfying the set of demands $\{d_u, d_{u+1}, \ldots, d_v\}$ in problem $(L)$. Indeed $(L_{u,v})$ is a relaxation of problem $(L)$. 


There are  $T(T-1)/2$ sub-problems and we order them by increasing end times first, then by increasing start times (see Table \ref{table:Index}). Sub-problem $(L_{u_i, v_i})$ will be referred to as sub-problem $i$. 
\begin{center}
\begin{table}[h!]
$$\begin{array}{l|c|c|c|c|c|c|ccc|c}
\text{Index} & 1 & 2 & 3 & 4 & 5 & 6 && \ldots && \frac{T(T-1)}{2}\\\hline
\text{Sub-problem} &(L_{1, 2})&(L_{1, 3})  & (L_{2, 3})  &(L_{1, 4})   & (L_{2, 4})  &(L_{3, 4}) && \ldots& &(L_{T-1, T}) 
\end{array} $$
\caption{Indexing sub-problems}\label{table:Index}
\end{table}
\end{center}

\begin{definition}
Sub-problems $(L_{u,v})$ and $(L_{u',v'})$ are \textbf{disjoint} if $$\llbracket u, v \rrbracket \cap \llbracket u', v'\rrbracket = \emptyset$$
\end{definition}

\subsection{Combining disjoint sub-problems provides a lower bound}
\label{WISP}
For sub-problem $i$, we denote by $w_i$ a lower bound of its total cost $C^{u_i v_i}$. Disjoint sub-problems can be combined to obtain a lower bound for the total cost of $(L)$.

\begin{theorem}
For any set $S$ of disjoint sub-problems, we have
$$\sum_{i \in S} {w_i}\leq C.$$
\end{theorem}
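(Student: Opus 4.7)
The plan is to exhibit, for each sub-problem $i \in S$, a feasible solution whose cost accounts for the part of the optimal solution of $(L)$ dedicated to the demands of $(L_{u_i,v_i})$, and then to sum these costs using disjointness. The key tool is the flow decomposition underlying the facility-location view (MILP\_UFL).

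\textbf{Step 1 (slice the optimal flow).} Let $(X^*, I^*, Y^*)$ be an optimal solution of $(L)$ with cost $C$. Interpreting it as a flow on the network of Figure \ref{RLeqFLotFig}, I decompose it into unit-demand slices: for each $t$ and each $r \geq t$, there exists $x_{tr} \geq 0$ (the amount produced at $t$ and used to serve demand $d_r$) such that $X^*_t = \sum_{r\geq t} x_{tr}$, $\sum_{t\leq r} x_{tr} = d_r$, and $I^*_t = \sum_{q\leq t}\sum_{r>t} x_{qr}$. Such a decomposition always exists for any feasible flow.

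\textbf{Step 2 (build a feasible solution of each sub-problem).} For each $i \in S$, define
\begin{align*}
X^i_t &= \sum_{r\in\llbracket u_i,v_i\rrbracket,\, r\geq t} x_{tr}, \\
I^i_t &= \sum_{q\leq t}\sum_{r\in\llbracket u_i,v_i\rrbracket,\, r>t} x_{qr}, \\
Y^i_t &= \mathds{1}_{X^i_t>0}.
\end{align*}
By construction, $(X^i,I^i,Y^i)$ satisfies the flow balance of $(L_{u_i,v_i})$ with its demands (which coincide with $d_t$ for $t\in\llbracket u_i,v_i\rrbracket$ and are $0$ elsewhere). Moreover $X^i_t\leq X^*_t\leq\overline{\alpha_t}$ and $I^i_t\leq I^*_t\leq\overline{\beta_t}$, so the solution is feasible for $(L_{u_i,v_i})$. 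Its cost is
\[
C^i = \sum_{t=1}^{T} p_t X^i_t + \sum_{t=1}^{T} h_t I^i_t + \sum_{t=u_i}^{v_i} s_t Y^i_t,
\]
recalling that $(L_{u_i,v_i})$ sets $s_t=0$ outside $\llbracket u_i,v_i\rrbracket$.

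\textbf{Step 3 (sum over $S$ using disjointness).} Because the intervals $\llbracket u_i,v_i\rrbracket$ are pairwise disjoint, every index $r$ belongs to at most one sub-problem, hence $\sum_{i\in S} X^i_t\leq X^*_t$ and $\sum_{i\in S} I^i_t\leq I^*_t$ for every $t$. For setups, each period $t$ lies in at most one $\llbracket u_i,v_i\rrbracket$, and for that $i$ one has $Y^i_t\leq Y^*_t$ (since $X^i_t\leq X^*_t$). Summing the three cost components gives $\sum_{i\in S} C^i \leq C_p + C_h + C_s = C$. Since $w_i$ is a lower bound on the optimum of $(L_{u_i,v_i})$ and $C^i$ is the cost of a feasible solution of $(L_{u_i,v_i})$, we get $w_i\leq C^i$, and summing yields $\sum_{i\in S} w_i \leq C$.

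\textbf{Main obstacle.} Production and holding costs are additive along demand slices and therefore decompose cleanly. The delicate part is the setup cost, which is non-additive: in the global solution, a single production at period $t$ may serve demands belonging to several sub-problems. This is resolved precisely by the disjointness assumption on the intervals $\llbracket u_i,v_i\rrbracket$, which guarantees that the setup at $t$ is charged by at most one sub-problem (and only when $t\in\llbracket u_i,v_i\rrbracket$), the remaining sub-problems seeing $s_t = 0$ at that period.
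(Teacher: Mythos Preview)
Your proof is correct and follows essentially the same approach as the paper: take an optimal solution of $(L)$, slice it according to which demand each produced unit serves, and for each sub-problem keep only the slices targeting demands in $\llbracket u_i,v_i\rrbracket$. The paper phrases this slicing informally as ``produce $d_t$ at the same periods as in $E^*$ with a First Come First Served policy'', whereas you make the decomposition explicit through the facility-location variables $x_{tr}$; both are instances of the same flow-decomposition idea, and both handle the non-additive setup cost via the disjointness of the intervals exactly as you describe in your final paragraph.
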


\begin{proof}
Let $E^*$ be an optimal production plan for $(L)$ of cost $C^*$ and $S$ be a set of disjoint sub-problems. We will build from $E^*$ a feasible solution to each sub-problem $i$ of $S$ and prove that their costs add up to less than $C^*$.

Consider a sub-problem $i$ in $S$. For each demand $d_t$, $t \in \llbracket u_i, v_i\rrbracket$ we produce $d_t$ at the same periods as it is produced in $E^*$ (with a First Come First Served policy).  We obtain by this process a feasible solution to sub-problem $i$ and denote its cost by $K_i$.

As the sub-problems in $S$ are disjoint and we keep the same production orders, the sum of setup costs paid in all of these sub-problems is less than or equal to the sum of setup costs paid in $E^*$. The production and inventory costs are identical to the costs paid in $E^*$ for the demands included in $\cup_{i \in S}\llbracket u_i, v_i\rrbracket$. It follows that $\sum_{i \in S} {K_i} \leq C^*$. Finally, as for each sub-problem $i$, $w_i$ is a lower bound of $C^{u_i v_i}$, we get: $\sum_{i \in S} {w_i} \leq \sum_{i \in S} {K_i}$.
\end{proof}

\subsection{Combining lower bounds at best}
Given a lower bound for each sub-problem, we wish to find the best lower bound of $C$, i.e. to determine the set $S$ of disjoint sub-problems that maximizes $\sum_{i \in S} {w_i}$. 

This problem can be seen as a Weighted Interval Scheduling Problem (WISP)  which can be solved in $O(n \log(n))$ where $n$ is the number of intervals \cite{kleinberg2006algorithm}. The algorithm  sorts the intervals in $O(n \log(n))$ and then applies a DP that runs in $O(n)$. In our case, there are $n=T(T-1)/2$ intervals (sub-problems) which are already sorted and the DP algorithm (called DPWisp in the paper) runs in $O(T^2)$. 

We use the indexing of intervals given in Table \ref{table:Index} and we denote by $\textsl{wisp}(i)$ the maximal weight that can be achieved using the $i$ first intervals. The forward DP writes as 
\begin{align*}
&\textsl{wisp}(0) = 0\\
&\textsl{wisp}(i) = \max \: \{\textsl{wisp}(i-1), \textsl{wisp}(prec_i) + w_i\}, \forall \: i = 1,\ldots, n
\end{align*}
where $prec_i$ is the biggest integer, smaller than $i$ ($prec_i < i$), such that the intervals $prec_i$ and $i$ are disjoint. For each sub-problem $i$ such that $u = 1$, we define $prec_i = 0$. Hence $prec_i$ is the first interval before the $i^{th}$ one that is disjoint with it. For instance, $[3,4]$ is the $6^{th}$ interval and $prec_{6} = 1$ since $[1,2]$ and $[3,4]$ are disjoint while $[2,3]$ and $[3,4]$ are not. The value $\textsl{wisp}(n)$ is then a lower bound of the global cost $C$.

We can also write the reverse version DPWisp (with the sub-problems considered backwards). The sub-problems are now sorted by decreasing start times first, then by decreasing end times. 
\begin{align*}
&\textsl{wisp}_r(n) = 0\\
&\textsl{wisp}_r(i) =  \max\{\textsl{wisp}_r(i+1), w_i + \textsl{wisp}_r(succ_i)\}, \forall \: i = 1,\ldots, n
\end{align*}
where $succ_i$ is the biggest integer, greater than $i$ ($succ_i>i$), such that the intervals $succ_i$ and
$i$ are disjoint.

\subsection{Computing lower bounds for sub-problems}
\label{ExamplesLB}
In our algorithms, we will  solve exactly the sub-problem by DP when the size is reasonable and solve the linear relaxation otherwise. The optimal cost of sub-problem $(L_{u,v})$ can be computed with DPLS (see Section \ref{DP}) applied to the periods $u$ to $v$. We need to pre-compute $\textsl{f} \: (u-1, q_{u-1})$ for $q_{u-1} \in \llbracket 0, \min \: \{\sum_{t=u}^{v}{d_t},\: \overline{\beta_{u-1}} \} \: \rrbracket $. This can be done with a greedy algorithm which determines the cheapest periods in order to produce the requested quantity and to store it until $u$. The DPLS applied to a sub-problem $(L_{u,v})$ runs then in $O((v - u + 1)(I_{max}^{uv})^2)$ where $I_{max}^{uv} = \max \{\overline{\beta_t} \: | \: t \in \llbracket u, v\rrbracket\}$.


The linear relaxation can be seen as a minimum cost flow problem (see Section \ref{RLeqFLot}) and can be solved in $O(T^2)$.




\subsection{Adaptation to a lower bound on setup costs}
\label{wispCs}
We can re-use this previous approach to obtain a lower bound on the setup cost variable $\mathit{Cs}$. Sub-problems are defined similarly except that we remove unitary  production costs and inventory costs ($p_t=h_t=0,\forall t$).

Note that we could use the same approach for $\mathit{Cp}$ and $\mathit{Ch}$. However, it is not necessary as $(L)$ is polynomial	 when removing setup costs.

\section{The lot-sizing global constraint}
\label{LSglobalCst}

In this section, we provide some CP background before presenting the \textsc{LotSizing} global constraint. We also study the complexity of achieving different consistency levels.

\subsection{Constraint programming background}
A Constraint Satisfaction Problem (CSP) \cite{HookerMS06} consists of a set of variables, with a finite domain of values for each variable, and a set of constraints on these variables. Upper cases are used for variables (e.g. $V_i$) and lower cases for values (e.g. $v_i$). We denote by $D(V_i)$ the domain of variable $V_i$ and by $\overline{V_i}$ (resp. $\underline{V_i}$)  the minimum (resp. maximum) value in $D(V_i)$. 

Let $c$ be a constraint on variables $\langle V_1, \ldots, V_n \rangle$. A \textit{support} for $c$ is a tuple  $\langle v_1,\ldots, v_n \rangle$ which satisfies $c$  and such that $v_i \in D(V_i)$ for each variable $V_i$.  A \textit{bound support} is a tuple  $\langle v_1,\ldots, v_n \rangle$ which satisfies $c$ and such that $\underline{V_i} \leq v_i \leq \overline{V_i}$ for each $V_i$.

A variable $V_i$ is \textit{arc consistent} (AC)  for constraint $c$ if each value of $D(V_i)$  belongs to a support for $c$. A variable $V_i$ is \textit{bound consistent} (BC) for constraint $c$ if $\underline{V_i}$ and $\overline{V_i}$ belong to a bound support for $c$. A variable $V_i$ is \textit{range consistent} (RC) for constraint $c$ if each value of $D(V_i)$ belongs to a bound support for $c$.  A constraint $c$ is  AC (resp. BC, RC) if all its variables are AC (resp. BC, RC). A CSP problem is  AC (resp. BC, RC) if each constraint is AC (resp. BC, RC).

The following example illustrates the three notions of AC, BC and RC. Consider the following linear constraint over two integer variables $x$ and $y$:
$$2x=y, \quad \quad x \in \{1,2,4\} \:\: \text{and} \:\: y \in \{ 4,5,6,7,8\}$$ 

\begin{table}[!h]
\footnotesize
\begin{center}
\begin{tabular}{|c|c|c|}
\hline
Consistency level & $D(x)$ & $D(y)$ \\
\hline
Initial domains & $\{1,2,4\}$ & $\{ 4,5,6,7,8\}$	\\
\hline
BC & $\{2,4\}$ & $\{ 4,5,6,7,8\}$	\\
\hline													
RC & $\{2,4\}$ & $\{4,6,8\}$	\\
\hline													
AC & $\{2,4\}$ & $\{ 4,8\}$	\\
\hline																							
\end{tabular}
\normalsize
\caption{\label{DomC} Three consistency levels}
\end{center}
\end{table}

The three levels of consistency are applied to the example and showed in table \ref{DomC}. The \textit{bound consistent} domains are $\{2,4\}$ and $\{ 4,5,6,7,8\}$ since we just check if the bounds belong to a bound support: only $1$ can be removed from the domain of $x$ as $\langle 1,2 \rangle$ is not a bound support. For instance, the bound support for $y=8$ is $\langle 4,8 \rangle$. The \textit{range consistent} domains are $\{2,4\}$ and $\{4,6,8\}$ since values $5$ and $7$ do not belong to a bound support. Indeed $\langle 2.5,5 \rangle$ and $\langle 3.5,7 \rangle$ are not bound supports as the values in a bound support must be in $\mathbb{Z}$. The value $6$ for $y$ is range consistent since $\langle 3,6 \rangle$ is a bound support. Finally the \textit{arc consistent} domains are $\{2,4\}$ and $\{ 4,8\}$ as we remove all inconsistent values.

\subsection{Definition}

We formally define here the global constraint \textsc{LotSizing}. This constraint is stated on the variable vectors $X = \langle X_1, \ldots, X_T \rangle$, $I = \langle I_1, \ldots, I_T \rangle$, $Y = \langle Y_1, \ldots, Y_T \rangle$ and the four cost variables $\mathit{Cp}$, $\mathit{Ch}$, $\mathit{Cs}$, $C$ of $(L)$. The data of the problem is denoted by $data = \{(p_t, h_t, s_t, d_t, \underline{\alpha_t}, \overline{\alpha_t}, \underline{\beta_t}, \overline{\beta_t}) \: | \: t \in \llbracket 1, T\rrbracket\}$.

\begin{definition}
$\textsc{LotSizing} (X, I, Y, \mathit{Cp}, \mathit{Ch}, \mathit{Cs}, C, data)$ has a solution if and only if there exists a production plan, solution of $(L)$ that satisfies:
\begin{align}
Cp &\leq \overline{Cp} \label{ubCp}\\
Ch &\leq \overline{Ch}\label{ubCh}\\
Cs &\leq \overline{Cs}\label{ubCs}\\
C &\leq \overline{C}\label{ubC}
\end{align}

\end{definition}

The $\textsc{LotSizing}$ global constraint has a solution if and only if the set of constraints $\{\eqref{flotMILP1}, \ldots, \eqref{domYMILP1}, \eqref{ubCp}, \ldots, \eqref{ubC}\}$ has a solution.

\subsection{Complexity}
\label{complexite}
This subsection presents theorems on the complexity of achieving BC, RC or AC on \textsc{LotSizing} and one of its restrictions that does not take into account the costs and focuses on the flow equations. Let us first give a property on the complexity of $(L)$.\\

\noindent \textbf{Property.}
\textit{Problem $(L)$ with $h_t=p_t=\underline{\beta_t}=\underline{\alpha_t} = 0$ and $\overline{\beta_t}=+\infty$ is NP-hard  \cite{florian1980deterministic}.}\\


\begin{theorem}
Achieving BC for \textsc{LotSizing} can be done in pseudo-polynomial time.
\end{theorem}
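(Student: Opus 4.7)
The plan is to generalize the dynamic program DPLS of Section~\ref{DP} so that it tracks, in addition to the inventory level, the accumulated values of the three cost components, and then to read BC off forward and backward tables much as Section~\ref{DP} does with the total cost. After the transformation of Section~\ref{transformation} I may assume $\underline{\alpha_t}=\underline{\beta_t}=0$. Achieving BC amounts to deciding, for each of the $3T+4$ variables $V$ of the constraint and each of its two extreme values $v \in \{\underline{V}, \overline{V}\}$, whether there exists a production plan satisfying \eqref{flotMILP1}--\eqref{domYMILP1} together with the cost bounds \eqref{ubCp}--\eqref{ubC} and in which $V = v$. There are $O(T)$ such queries.

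I would first compute a Boolean table $g(t, I_t, cp, ch, cs)$ which is true exactly when some plan over periods $1,\ldots,t$ ends with inventory $I_t$ and has accumulated production, holding and setup costs equal to $cp$, $ch$ and $cs$. The recurrence is the obvious analogue of \eqref{DPeq}: for each admissible choice of $I_{t-1}$ in the range $a\ldots b$ of DPLS, $X_t$ is determined and the three partial sums are updated by $p_t X_t$, $h_t I_t$ and $\mathds{1}_{X_t > 0}\, s_t$. Since cumulative costs exceeding their upper bounds can be discarded, the table has at most $O(T \cdot I_{max} \cdot \overline{\mathit{Cp}} \cdot \overline{\mathit{Ch}} \cdot \overline{\mathit{Cs}})$ relevant entries, each filled in $O(I_{max})$ time. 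The constraint is feasible if and only if some terminal entry $g(T, 0, cp, ch, cs)$ is true with $cp+ch+cs \leq \overline{C}$. I would then compute the symmetric backward table $g_r$ by mirroring $\textsl{f}_{r}$ of Section~\ref{DP}.

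Given both tables, BC for each variable is decided by combining a forward and a backward value: fixing $X_t$, $I_t$ or $Y_t$ restricts which transition is allowed at period $t$, while fixing $\mathit{Cp}$, $\mathit{Ch}$, $\mathit{Cs}$ or $C$ to a bound restricts which terminal coordinate is allowed. Each of the $O(T)$ bound supports can therefore be tested in time polynomial in the state space, yielding an overall pseudo-polynomial algorithm. The main obstacle is precisely this enlargement of the state space: DPLS on its own keeps only the minimum total cost from each $(t,I_t)$, which is insufficient here because the four cost bounds \eqref{ubCp}--\eqref{ubC} are independent and a bound support may need to trade off across the three cost components. Tracking the three partial sums separately resolves the issue while keeping the complexity polynomial in the numerical input parameters, which is exactly what pseudo-polynomial time permits.
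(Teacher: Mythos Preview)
Your proposal is correct and follows essentially the same approach as the paper: the paper reduces BC to a Shortest Path Problem with Resource Constraints (SPPRC) in the DPLS graph, with the three cost components as resources and $C$ as the objective, and invokes the fact that SPPRC is only weakly NP-hard. Your explicit Boolean table $g(t, I_t, cp, ch, cs)$ together with the forward/backward combination is precisely the standard label-extension dynamic program for SPPRC spelled out in full, so the two arguments coincide at the level of ideas.
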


\begin{proof}
BC for \textsc{LotSizing} can be achieved by solving a Shortest Path Problem with Resource Constraints (SPPRC) in the graph of DPLS (see Section \ref{DP}) where the resources are the three intermediate costs of respective capacities $\overline{Cp}, \overline{Ch},\overline{Cs}$ and the global cost $C$ is the objective. Finding the shortest path in this graph while respecting the three resources at the final node gives a bound support regarding the three intermediate costs. SPPRC is known to be weakly NP-hard~\cite{feillet2004exact}.
\end{proof}

\

We denote by $(L_r)$ the feasibility problem associated to $(L)$.
\begin{align*}
 &&I_{t-1} + X_{t} &= d_{t} + I_{t} && \forall \: t = 1 \ldots T \\
&&X_{t} &\leq \overline{\alpha_t} Y_t  && \forall \: t = 1 \ldots T \\
(L_r) &&X_t &\in \{\underline{\alpha_t}, \ldots, \overline{\alpha_t}\} && \forall \: t = 1 \ldots T \\
&&I_t &\in \{\underline{\beta_t}, \ldots, \overline{\beta_t}\} && \forall \: t = 1 \ldots T \\
&&Y_t &\in \{0,1\} && \forall \: t = 1 \ldots T 
\end{align*}
The  constraints of $(L_r)$ describes the dynamics of the problem, without considering costs. 

\begin{theorem}
\label{thBCPoly}
	Achieving BC on $(L_r)$ can be done in $O(T)$.
\end{theorem}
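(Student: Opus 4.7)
The plan is to recast $(L_r)$ as a system of interval constraints on the cumulative production, which forms a chain of difference constraints on which bound consistency can be propagated in linear time. Setting $P_0 = 0$, $P_t = \sum_{s=1}^{t} X_s$ and $D_t = \sum_{s=1}^{t} d_s$, the flow balance equations yield $I_t = P_t - D_t$, which eliminates the $I_t$ variables. The inventory bounds become $D_t + \underline{\beta_t} \leq P_t \leq D_t + \overline{\beta_t}$ and the production bounds become the difference constraints $\underline{\alpha_t} \leq P_t - P_{t-1} \leq \overline{\alpha_t}$. The setup variable $Y_t$ is functionally determined once bound consistency on $X_t$ is in hand.

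I would then perform a forward sweep computing, for $t = 1,\ldots,T$,
\begin{align*}
\underline{P_t}^F &= \max\bigl(D_t + \underline{\beta_t},\ \underline{P_{t-1}}^F + \underline{\alpha_t}\bigr),\\
\overline{P_t}^F &= \min\bigl(D_t + \overline{\beta_t},\ \overline{P_{t-1}}^F + \overline{\alpha_t}\bigr),
\end{align*}
together with a symmetric backward sweep producing $\underline{P_t}^B, \overline{P_t}^B$ by propagating from $t = T$ down to $t=1$ using $P_{t-1} \in [P_t - \overline{\alpha_t},\, P_t - \underline{\alpha_t}]$. The candidate tight interval for $P_t$ is $\bigl[\max(\underline{P_t}^F,\underline{P_t}^B),\, \min(\overline{P_t}^F,\overline{P_t}^B)\bigr]$. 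If any of these intervals is empty, $(L_r)$ is infeasible; otherwise both sweeps run in $O(T)$.

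The main step of the proof is to show that every integer in the computed interval for $P_t$ extends to a feasible assignment of the whole sequence, i.e.\ that the interval is exactly the projection of the feasible polytope onto $P_t$. This follows from a gluing argument specific to the chain topology: for a target $v \in [\underline{P_t},\overline{P_t}]$, one constructs a feasible prefix $P_0,\ldots,P_{t-1}$ by, say, greedily taking the smallest value of $P_s$ compatible with both sweeps, and a symmetric suffix $P_{t+1},\ldots,P_T$; because constraints link only consecutive periods, the two pieces concatenate into a feasible sequence, and integrality comes for free since the underlying constraint matrix is totally unimodular. This projection/extension argument is the main obstacle; once it is secured, the algorithmic part is textbook two-pass propagation.

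Finally, tight bounds for $X_t = P_t - P_{t-1}$ and $I_t = P_t - D_t$ are read off in $O(1)$ per period from the tight intervals of $P_t$: any pair $(P_{t-1},P_t)$ with each coordinate in its tight interval and $P_t - P_{t-1} \in [\underline{\alpha_t}, \overline{\alpha_t}]$ is jointly feasible by the same gluing argument. For the Boolean variable $Y_t$, the value $1$ is bound consistent whenever the instance is feasible, and $0$ is bound consistent iff $0$ lies in the tight interval of $X_t$; both checks are $O(1)$. The overall algorithm therefore achieves BC for $(L_r)$ in $O(T)$.
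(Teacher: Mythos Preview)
Your argument is correct and delivers the $O(T)$ bound, but the route is genuinely different from the paper's. The paper does not change variables at all: it observes that the constraint network of $(L_r)$, with one ternary flow-balance constraint $I_{t-1}+X_t=d_t+I_t$ and one binary setup constraint $X_t\le\overline{\alpha_t}Y_t$ per period, is Berge-acyclic (the intersection graph is a tree and any two constraints share at most one variable), and then invokes the known fact that on Berge-acyclic networks bound consistency is reached by waking each constraint at most twice in a suitable order; since each constraint here filters in $O(1)$, the total is $O(T)$. Your approach instead eliminates the equalities via the cumulative variables $P_t$, reduces everything to a chain of unary bounds and binary difference constraints on the $P_t$, runs an explicit forward/backward sweep, and proves exactness by a direct gluing argument. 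Your version is more self-contained and makes the algorithm fully explicit without appealing to constraint-network theory; the paper's version is shorter and more structural but off-loads correctness to the Berge-acyclicity result. Two minor points on your write-up: the prefix construction is cleaner stated as ``pick $P_{t-1}$ in the (provably non-empty) intersection of its forward interval with $[v-\overline{\alpha_t},\,v-\underline{\alpha_t}]$ and recurse'' rather than as a greedy minimum compatible with \emph{both} sweeps; and your handling of $Y_t$ tacitly assumes $D(Y_t)=\{0,1\}$, which matches the stated problem, but in a general propagation setting a fixed $Y_t=0$ must be folded into $\overline{\alpha_t}$ before the sweep.
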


\begin{proof}
Figure \ref{BergeAcyclic} represents the constraint network $(L_r)$ as well as the corresponding intersection graph. The rectangular-shaped constraints are the flow balance constraints and the dashed oval ones are the setup constraints. The intersection graph is built as follows: we set a vertex for each constraint and two vertices are linked if and only if the corresponding constraints have at least one variable in common. As the intersection graph is acyclic and each pair of constraints has at most one variable in common, the constraint network is Berge-acyclic. It is known that if we filter each constraint of a Berge-acyclic constraint network in an appropriate order then each constraint needs only to be woken twice in order to reach the fix-point~\cite{lagerkvist2009propagator}. Each constraint of the network can be filtered in $O(1)$, hence we can achieve BC on $(L_r)$ in $O(T)$.
\begin{figure}[!h]
\centering
   \includegraphics[scale = 0.48]{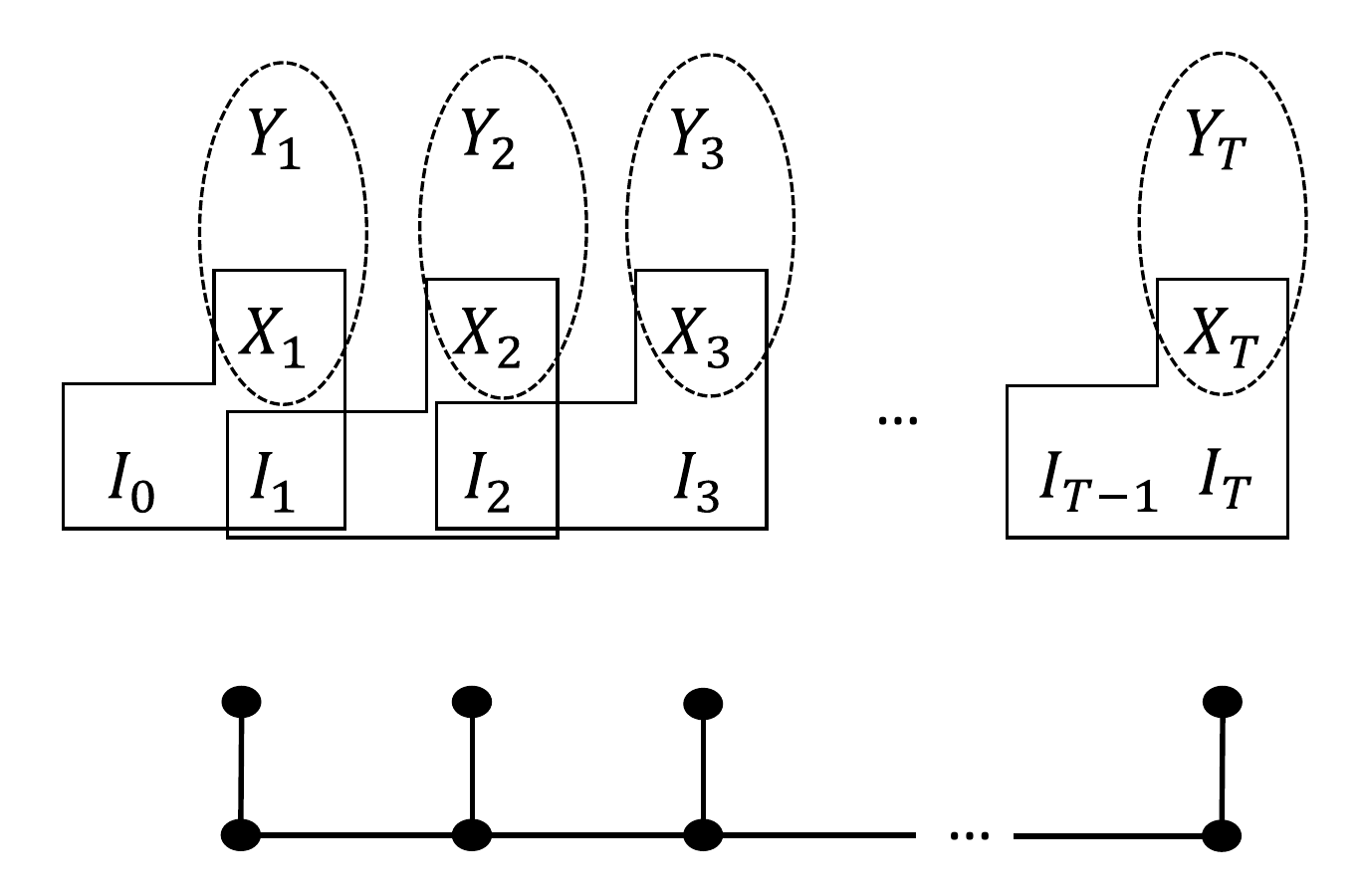}
   \caption{\label{BergeAcyclic} The constraint network $(L_r)$ and the corresponding intersection graph}
\end{figure}
\end{proof}

In order to investigate RC for $(L_r)$, we consider $(F)$ the more general problem of finding an integer flow in a directed graph $G=(V,E)$:
\begin{align}
&&l_{ij} \leq x_{ij} &\leq u_{ij} && \forall \:  (i,j) \in E \nonumber\\
(F) &&\sum_{j\in \delta^+_i}{x_{ij}} - \sum_{j\in \delta^-_i}{x_{ji}} &= b_i && \forall \: i \in V \nonumber\\
&&x_{ij} &\in \mathbb{N} && \forall \:  (i,j) \in E  \nonumber
\end{align}
where $x_{ij}$ is the flow going from node $i$ to node $j$, $\delta^+_i$ (resp. $\delta^-_i$) is the set of successor (resp. predecessor) nodes of node $i$ and $b_i \in \{-1,0,1\}$.

\begin{theorem}
BC and RC are equivalent for $(F)$.
\end{theorem}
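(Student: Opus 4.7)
The implication RC $\Rightarrow$ BC is immediate from the definitions: a bound support witnessing an extremal value is, in particular, a bound support for that value. The content of the statement therefore lies in establishing BC $\Rightarrow$ RC.

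Fix an arc $(i,j) \in E$ and an integer $v$ with $\underline{x_{ij}} \leq v \leq \overline{x_{ij}}$; the goal is to exhibit a bound support $f$ with $f_{ij} = v$. By BC there exist bound supports $f^-$ and $f^+$ of $(F)$ with $f^-_{ij} = \underline{x_{ij}}$ and $f^+_{ij} = \overline{x_{ij}}$. Consider the signed integer vector $g = f^+ - f^-$ indexed by $E$. Since $f^-$ and $f^+$ satisfy the same conservation equation $\sum_{j \in \delta^+_a} x_{aj} - \sum_{j \in \delta^-_a} x_{ja} = b_a$, the vector $g$ is a signed integer circulation, with zero net balance at every node.

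I would then lift $g$ to a non-negative integer circulation on the auxiliary graph obtained by adjoining a reverse arc $(b,a)$ for each original arc $(a,b) \in E$: send $g_{ab}$ units on the forward copy of $(a,b)$ when $g_{ab} \geq 0$, and $-g_{ab}$ units on its reverse copy otherwise. By the classical cycle-decomposition theorem for non-negative integer circulations, this lifted circulation can be written as a sum of unit-flow directed cycles in the auxiliary graph. Because $g_{ij} = \overline{x_{ij}} - \underline{x_{ij}} \geq 0$, the forward copy of $(i,j)$ carries exactly $g_{ij}$ units and therefore appears in exactly $g_{ij}$ of these cycles. I select $v - \underline{x_{ij}} \in [0, g_{ij}]$ of them, project the resulting sub-circulation back to a signed integer vector $\delta$ on $E$, and set $f = f^- + \delta$.

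It remains to check that $f$ is a bound support. Conservation at each node is inherited from $\delta$ being the projection of a circulation; $f_{ij} = \underline{x_{ij}} + (v - \underline{x_{ij}}) = v$ by construction; and $f$ is integer valued. For the bound conditions, note that the selected sub-circulation is coordinate-wise dominated by the lifted one, so $\delta_{ab} \in [0, g_{ab}]$ on arcs with $g_{ab} \geq 0$ and $\delta_{ab} \in [g_{ab}, 0]$ on arcs with $g_{ab} < 0$; in either case $f_{ab}$ lies between $f^-_{ab}$ and $f^+_{ab}$, hence in $[\underline{x_{ab}}, \overline{x_{ab}}]$. The principal obstacle is handling arcs on which $g$ is negative, which is precisely what the auxiliary graph with reverse arcs is designed to resolve, by reducing the argument to the standard unit-cycle decomposition of a non-negative integer circulation.
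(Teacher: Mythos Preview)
Your argument is correct and complete: the difference $g=f^+-f^-$ is an integer circulation, its lift to the bidirected auxiliary graph is a non-negative integer circulation, unit-cycle decomposition lets you peel off exactly $v-\underline{x_{ij}}$ cycles through the forward copy of $(i,j)$, and the resulting $f=f^-+\delta$ is sandwiched arc-wise between $f^-$ and $f^+$, hence satisfies all bounds. The only cosmetic point is that the paper denotes the arc bounds by $l_{ij},u_{ij}$ rather than $\underline{x_{ij}},\overline{x_{ij}}$.

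The paper takes a different route: it forms the convex combination $\gamma x'+(1-\gamma)x''$ hitting $k$ on the fixed arc, tightens every arc's bounds to $\lfloor\cdot\rfloor$ and $\lceil\cdot\rceil$ of that combination, observes the resulting LP is feasible (the combination itself witnesses this), and invokes total unimodularity of the node--arc incidence matrix to extract an integer solution, which then has value exactly $k$ on the distinguished arc since the tightened bounds there collapse to $k$. Your proof is more elementary and fully constructive, relying only on the cycle-decomposition theorem and avoiding any appeal to LP theory or TU; the paper's proof is shorter and leans on a standard polyhedral fact. Both yield the result in the same generality (neither actually uses the hypothesis $b_i\in\{-1,0,1\}$).
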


\begin{proof}
By definition, RC implies BC for $(F)$. Conversely, assume BC for $(F)$. 

In order to show the converse, we show that if $k \in\llbracket l_{ij},  u_{ij}\rrbracket$, then $k$ belongs to a bound support for $(F)$. 

Let $i_0$ be a direct predecessor of $j_0$ in the graph. 
BC implies that there exists $x'$ (resp. $x''$) an integer solution of (F) such that $x'_{i_0j_0} = l_{i_0j_0}$ (resp. $x''_{i_0j_0} = u_{i_0j_0}$).
Let $\gamma \in [0,1]$ such as $k = \gamma \: l_{i_0j_0} + (1-\gamma) u_{i_0j_0} \in \mathbb{N}$. Let's show that there exists an integer solution of $(F)$ where $x_{i_0j_0} = k$.
We modify the bounds $\: \forall \:  (i,j) \in E$:
\begin{align*}
\widehat{l_{ij}} = \lfloor \gamma x'_{ij} + (1-&\gamma) x''_{ij} \rfloor  \\
\widehat{u_{ij}} = \lceil \gamma x'_{ij} + (1-&\gamma) x''_{ij} \rceil  
\end{align*}
We can then show that $\forall \: i,j \: \: \widehat{l_{ij}} \geq l_{ij}$ and $\widehat{u_{ij}} \leq u_{ij}$. The constraint matrix of $(F)$ is totally unimodular since it is a flow problem, hence the application of the simplex algorithm to $(F)$ with the updated bounds gives an integer solution $x'''$ where $x_{i_0j_0} = k$.
\end{proof}

As constraints \eqref{flotMILP1} are flow constraints and the setup variables $Y_t$ are binary, the following result follows.
\begin{corollary}
\label{BCPoly}
Achieving BC on $(L_r)$ is equivalent to achieving RC on $(L_r)$.
\end{corollary}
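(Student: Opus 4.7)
The plan is to view $(L_r)$ as an instance of the integer flow problem $(F)$ on the variables $X$ and $I$ coupled with the binary setup variables $Y$, and then combine the preceding theorem with the fact that a binary variable is BC iff RC.

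First, I would recast the flow balance constraints \eqref{flotMILP1} together with the integer domain constraints on $X_t$ and $I_t$ as an instance of $(F)$ on the network of Figure \ref{ULS}. The only technical point is the restriction $b_i \in \{-1, 0, 1\}$ in $(F)$: the proof of the preceding theorem does not actually exploit this restriction, since it relies only on total unimodularity of the node-arc incidence matrix together with integer bounds, and thus extends without modification to arbitrary integer right-hand sides; alternatively, each demand node with $b_i = d_t$ can be split into $d_t$ unit-demand copies. By the preceding theorem, BC and RC are equivalent on the $(X, I)$ sub-instance.

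Next, I would handle the setup variables. Because $D(Y_t) \subseteq \{0, 1\}$, every value of $D(Y_t)$ coincides with $\underline{Y_t}$ or $\overline{Y_t}$, so BC and RC coincide on each $Y_t$ for free.

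The final step is to glue these two pieces together through the setup constraints \eqref{setupMILP1}. Assuming BC on $(L_r)$, for any $k \in D(X_t)$ I would take a bound support $(x, i)$ of the flow sub-instance (which exists by RC on the flow) and extend it to a bound support of the full $(L_r)$ by setting $y_s = 1$ whenever $x_s > 0$ and $y_s = 0$ otherwise. The required check is that $y \in D(Y)$: if $x_s > 0$ then $Y_s = 1 \in D(Y_s)$ because $\overline{Y_s}$ has a bound support by BC; if $x_s = 0$ then $\underline{\alpha_s} = 0$ and BC on \eqref{setupMILP1} ensures $Y_s = 0 \in D(Y_s)$. A symmetric argument handles values of $D(I_t)$ and $D(Y_t)$. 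I expect the only subtle step to be precisely this compatibility check: one must make sure that the BC-filtered domains of $X_s$ and $Y_s$ are jointly consistent with the coupling constraint \eqref{setupMILP1}, so that the natural lifting from a flow bound support to a $(L_r)$ bound support stays inside the current domains. Once this is verified, the corollary follows at once.
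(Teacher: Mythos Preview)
Your approach matches the paper's: invoke the preceding theorem on the $(X,I)$ flow structure and use the binarity of $Y$ for the rest. The paper's own justification is a single sentence that skips both subtleties you raise (the $b_i\in\{-1,0,1\}$ restriction in $(F)$ and the gluing through constraint~\eqref{setupMILP1}), so your write-up is in fact more thorough than the original.

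There is one slip in your lifting step. When $x_s=0$ you claim that ``BC on \eqref{setupMILP1} ensures $Y_s=0\in D(Y_s)$''; this is false. Take $D(X_s)=\llbracket 0,\overline{\alpha_s}\rrbracket$ and $D(Y_s)=\{1\}$: the pair is already bound consistent for \eqref{setupMILP1}, yet $0\notin D(Y_s)$. The repair is immediate and does not affect the argument: when $x_s=0$ the inequality $x_s\le\overline{\alpha_s}\,y_s$ holds for \emph{any} $y_s\ge 0$, so instead of insisting on $y_s=0$ just pick $y_s=\overline{Y_s}$ (or any value in $\llbracket\underline{Y_s},\overline{Y_s}\rrbracket$). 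Combined with your correct treatment of the case $x_s>0$ (where $\overline{X_s}>0$ and BC on $(L_r)$ forces $\overline{Y_s}=1$), the lifted tuple $(x,i,y)$ is a bound support for $(L_r)$, and the corollary follows.
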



When there are no holes in the domains of $X$ and $I$, RC is equivalent to AC for $(L_r)$. Note that there may exist holes in lot-sizing problems when considering batching constraints for instance. 


\section{Filtering the \textsc{LotSizing}  constraint}
\label{filteringLS}

This section describes the filtering of the \textsc{LotSizing} constraint. It also gives some implementation details to improve the incrementality of the global constraint.

Algorithm \ref{alg:propagateLS} gives an overview of the main filtering steps of \textsc{LotSizing}. Each step refers to the corresponding section for detailed explanations. When all setup variables are instantiated the problem amounts to a minimum cost flow problem (lines 1-2). If not, the general case is as follows. Firstly, the problem is transformed by removing all lower bounds (line 4) as \textsc{LotSizing} is defined with lower bounds and these can increase during the search. Secondly production and inventory costs lower bounds are computed (lines 5-6). Thirdly, when the overall problem is of reasonable size (lines 7-11) the remaining filtering is performed using dynamic programming. If not, the WISP relaxation is used and filtering is performed via the WISP support (lines 13-17).

\begin{algorithm}[t!]
\caption{filtering algorithm of  \textsc{LotSizing}}
\label{alg:propagateLS}
\begin{algorithmic}[1]
\If {all the $Y$ are instantiated}
\State solve the min flow problem and instantiate $X$ and $I$ (\S \: \ref{dominance})
\Else
\State check feasibility (Corollary \: \ref{BCPoly})
\State remove lower bounds (\S \: \ref{transformation})
\State update $\underline{\mathit{Cp}}$ with flow relaxation restricted to production costs (\S \: \ref{costLB})
\State update $\underline{\mathit{Ch}}$ with flow relaxation restricted to inventory costs (\S \: \ref{costLB})
\If {the DP is scalable}
\State update $\underline{\mathit{C}}$ with DPLS (\S \: \ref{costLB})
\State filter variables via DP filtering (\S \: \ref{DPFiltering})
\State update $\underline{\mathit{Cs}}$ with DP (\S \: \ref{costLB})
\State filter variables via DP filtering (\S \: \ref{DPFiltering})
\Else 
\State compute all the $C_{uv}$ with appropriate relaxations (\S \: \ref{ExamplesLB})
\State update $\underline{\mathit{C}}$ with DPWisp  (\S \: \ref{WISP})
\State filter variables via WISP support filtering (\S \: \ref{scalingFiltering})
\State update $\underline{\mathit{Cs}}$ with DPWisp (\S \: \ref{wispCs})
\State filter variables via WISP support filtering (\S \: \ref{scalingFiltering})
\EndIf
\EndIf
\State \textbf{end algorithm}
\end{algorithmic}
\end{algorithm}

\subsection{Filtering when the setup variables are instantiated}
\label{dominance}

When all the setup variables $Y$ are instantiated, problem $(L)$ becomes polynomial and amounts to a minimum cost flow problem. Solving a minimum cost flow problem on the flow graph presented in Figure \ref{RLeqFLotFig} finds a solution to $(L)$ that minimizes $C$. This allows the user to branch only on the $Y$ variables since the solver can instantiate all the other variables in polynomial time when the setup variables are instantiated. Note that when using \textsc{Lotsizing} in a more complex model (with multiple \textsc{Lotisizing} or with additional constraints) the resulting problem, when all the $Y$ are instantiated, may not be polynomial. Therefore we let the user specify when stating the constraint if this property holds or not. Note also that a minimum cost flow dedicated filtering algorithm \cite{steiger2011efficient} can be used at this stage. Either of these two options allows the user to branch only on the $Y$ variables.

\subsection{Filtering cost lower bounds}
\label{costLB}
Lower bounds of the cost variables are computed as follows:
\begin{itemize}
	\item A lower bound on the production cost $\mathit{Cp}$ is computed by solving a minimum cost flow problem on the graph presented in \S \: \ref{RLeqFLot} considering only the variable production costs.
	\item A lower bound on the inventory cost $\mathit{Ch}$ is computed by solving a minimum cost flow problem on the graph presented in \S \: \ref{RLeqFLot} considering only the variable inventory costs.
	\item A lower bound on the global cost $\mathit{C}$ is computed using DPLS. The lower bound is given by the value $\textsl{f} \: (T, 0)$.
	\item A lower bound on the setup cost $\mathit{Cs}$ is computed via dynamic programming as well. We consider here the problem $(L)$ without production or inventory costs. As mentioned in the literature review, this problem can be solved using the traditional knapsack dynamic programming algorithm slightly adapted to take into account the inventory upper bounds (we call this algorithm DPKnap).
\end{itemize}

\subsection{Filtering X and I via dynamic programming}
\label{DPFiltering}
DPLS gives a lower bound of the global cost $C$. In order to filter the variables we use the tables created by DPLS and reverse DPLS. Remember that in the graph described in \ref{DP}, $\textsl{f} \: (t, I_t)$ can be seen as the shortest path from the node $(0,0)$ to $(t, I_t)$ and $\textsl{f}_{r} (t, I_t)$ is the shortest path from $(t, I_t)$ to $(T,0)$. We filter each value $i_t$ in the domain of $I_t$ in $O(T I_{max})$:
\begin{align}
&\forall \: t \in \llbracket 1, T \rrbracket, i_t \in D(I_{t}) \notag \\
&\label{filtDPI} \textsl{f}\:(t, i_t) + \textsl{f}_{r}(t, i_t) > \overline{C} \Rightarrow I_{t} \neq i_t
\end{align}
We filter each value in the domain of $X_t$ in $O(T I_{max}^2)$:
\begin{align}
&\forall \: t \in \llbracket 1, T \rrbracket, i_{t-1} \in D(I_{t-1}), i_t \in D(I_{t}) \notag \\
&\label{filtDPX} \textsl{f}\:(t-1, i_{t-1}) + cost(t, i_{t-1}, i_t) + \textsl{f}_{r}(t, i_t) > \overline{C} \Rightarrow X_{t} \neq x_t
\end{align}
where $x_t = d_{t} + i_t - i_{t-1}$ and  $cost(t, i_{t-1}, i_t) = \mathds{1}_{X_t > 0} s_{t} + p_{t} x_t + h_{t} i_t$.

\subsection{Scaling the filtering based on dynamic programming}
\label{scalingFiltering}
In the case that DPLS has memory issues on the overall problem $(L)$, we solve a WISP (see Section \ref{WISP}) to find a lower bound on $C$. We can adapt the filtering rules \eqref{filtDPI} and \eqref{filtDPX} on the sub-problems of reasonable size. In order to compare the shortest paths to the global upper bound $\overline{C}$, we need to have a lower bound on the cost of the production outside the sub-problem. We use DPWisp and its reverse version to do so. We can then define:
\begin{itemize}
	\item $ \textrm{lbBefore}(t) = \textsl{wisp}\:(\frac{t(t-1)}{2})$ which is the best bound we can get by combining the sub-problems ending by at most $t$. It is a lower bound on the satisfaction of the demands $d_1$ to $d_t$. We set $\textrm{lbBefore}(0) = 0$.
	\item $ \textrm{lbAfter}(t) = \textsl{wisp}_r(\frac{(T-t)(T- t + 1)}{2})$ which is the best bound we can get by combining the sub-problems starting at $t$. It is a lower bound of the cost for satisfying the demands from $d_t$ to $d_T$. We set $\textrm{lbAfter}(T+1) = 0$.
\end{itemize}

Figure \ref{WispFilering} represents the different lower bounds computed while filtering the value $i_t$ for the variable $I_t$. $I_t$ belongs to the sub-problem $(L_{u,v})$, $lbBefore(u-1)$ and $lbAfter(v+1)$ are computed via DPWisp outside that sub-problem. 

\begin{figure}[!h]
\centering
   \includegraphics[scale = 0.4]{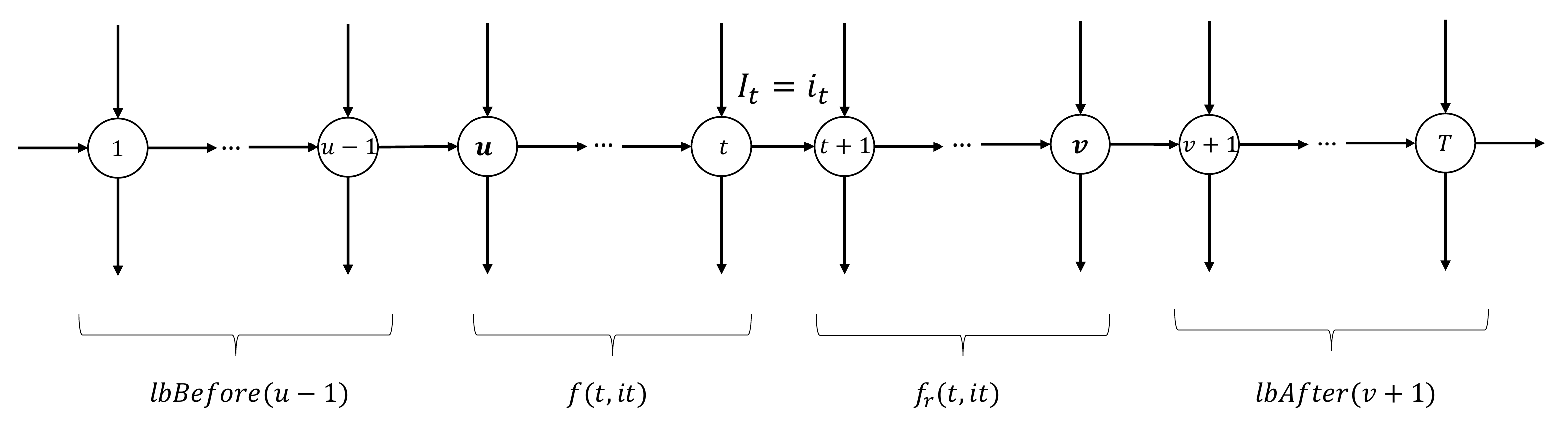}
   \caption{\label{WispFilering} Bounds when filtering $I_t$ with the WISP support filtering}
\end{figure}

We filter the variables via the two following rules:
\begin{align*}
&\forall \: t \in \llbracket u, v \rrbracket, i_t \in D(I_{t}) \\
&\textrm{lbBefore}(u-1) + \textsl{f}\:(t, i_t) + \textsl{f}_{r}(t, i_t) + \textrm{lbAfter}(v+1) > \overline{C} \Rightarrow I_{t} \neq i_t\\
\\
&\forall \: t \in \llbracket u, v \rrbracket, i_{t-1} \in D(I_{t-1}), i_t \in D(I_{t})  \\
&\textrm{lbBefore}(u-1) + \textsl{f}\:(t-1, i_{t-1}) + cost(t,i_{t-1}, i_t) + \textsl{f}_{r}(t, i_t) + \textrm{lbAfter}(v+1) > \overline{C}\\
& \Rightarrow X_{t} \neq x_t
\end{align*}

For all the sub-problems of the support (or solution) of the WISP, we can compute DPLS and its reverse version. Note that here $\textsl{f}\:(t, i_t)$ and $\textsl{f}_{r}(t, i_t)$ come from DPLS applied to the sub-problems. Hence when considering sub-problem $(L_{u,v})$ and $u\leq t\leq v$, $\textsl{f}\:(t, i_t)$ is a lower bound of the cost for satisfying demands $d_u$ to $d_t$ and having $I_t=i_t$. Similarly $\textsl{f}_{r}(t, i_t)$ is a lower bound on the cost for satisfying the demands $d_{t+1}$ to $d_v$ and having $I_t=i_t$. Moreover, since the sub-problems consider only the demands $d_u$ to $d_v$, we cannot filter values greater than or equal to $\sum_{k=t+1}^{v}{d_k}$ for $I_t$ and $X_t$. Indeed, greater values might be used to satisfy demands outside the sub-problem and are not considered when computing DPLS.

Note that although the scaling may affect the quality of the filtering, it is a pragmatic rule applied to avoid wakening costly propagation when little filtering is expected.


\subsection{Adaptation to take into account the setup cost}
In order to adapt the filtering to take into account $\overline{\mathit{Cs}}$, we do not take into account the production and inventory costs. The resulting problem can then be tackled by dynamic programming with DPKnap. The variables can be filtered via the WISP support with the DPKnap computed on the sub-problems. The filtering rules are applied with the upper bound of $\mathit{Cs}$.

\section{Numerical results on the single-item lot-sizing problem}
\label{numResultsLS}
This section validates our global constraint and the filtering mechanisms described above. We compare the performance of \textsc{LotSizing} to four other methods on the single-item lot-sizing problem. 

\paragraph{Five methods}
The five methods that solve the single-item lot-sizing are:
\begin{itemize}
\item A basic CP model (CP\_Basic), which is a decomposition of the single-item lot-sizing problem basically equivalent to the MILP model with the implication constraints $X_t > 0 \Rightarrow Y_t = 1, \forall \: t \in\llbracket 1, T \rrbracket$ instead of the setup constraints \eqref{setupMILP1}
\item A CP model with our \textsc{LotSizing} global constraint (CP\_LS)
\item The dynamic programming algorithm presented in \ref{DP} (DP)
\item The classical aggregated MILP model (MILP\_AGG)
\item The facility location MILP model (MILP\_UFL)
\end{itemize}

The MILP models were implemented with CPLEX version 12.6 and the CP models in Choco 3.3 \cite{choco}.

\paragraph{Branching heuristics for the CP models}
A default branching heuristic is used to instantiate the variables to their lower bounds in a lexicographic order (chronological order here).
The property described in \ref{dominance} is valid for the single-item lot-sizing problem. For the sake of comparison, the same improvement is done for CP\_Basic. The search space is thus restricted to the setup variables ($Y$) for both CP models.

\paragraph{Cost upper bound}
Since we want to assess the quality of the filtering and \textsc{LotSizing} uses cost-based filtering, we choose to have the best possible upper bound on the global cost at the start of the resolution: the optimal cost. This means that our models still have to find the optimal solution. By setting the initial upper bound to the optimal value, we simply aim to avoid the issue of finding a good enough initial solution that would "activate the filtering" we want to assess. It gives a simple and identical set-up to all compared approaches and allows us to focus the analysis on the filtering we have been investigating. In a more realistic setting, the model CP\_LS can be used to find upper bounds.

\paragraph{Instance parameters}
The single-item instances are generated based on the parameters $d_{\text{avg}}$, $e$, $\delta$, $\theta$, $\lambda$, and $T$ as follows:
\begin{itemize}
\item The inventory costs are constant and equal to $1$ (i.e. $h_t=h=1$).
\item The setup and the unitary production costs are generated using two parameters: $e$ and $\theta$.
\begin{itemize}
\item $e$ represents the overall unitary production cost (i.e. the unitary production cost if the production capacity is saturated: $p_t \: \overline{\alpha_t} + s_t$ divided by $\overline{\alpha_t}$). We set $e=10$.
\item $\theta \in [0,1]$ represents the portion of the setup cost to the unitary production cost. The overall production cost at $t$ (i.e. $e \: \overline{\alpha_t}$) will be imputable for $\theta$ to its setup cost and for $1-\theta$ to the unitary production cost at $t$. For each period, $\theta$ is uniformly randomly set in the interval $[0,1]$.
\end{itemize}
\item The demand is uniformly randomly generated in the interval $[d_{\text{avg}}-\delta, d_{\text{avg}} + \delta]$.
\item The production and inventory capacities are constant and equal to $\lambda \: d_{\text{avg}}$. 
\end{itemize}
For each problem, we give the set of parameters that were used to generate the instances. Each class of instances contains 10 instances.

\paragraph{Experimentation setup}
All the tests are run under Windows 8 on an Intel Core i5 @ 2.5 GHz with 12GB of RAM. We set a time limit of 200s and a memory limit of 4GB of RAM. The indicator NODE is the average number of nodes computed by each model on the class. CPU corresponds to the average CPU time used by the models. RNB is the average gap of the root node lower bound to the optimal value. LR is the average gap of the linear relaxation to optimal. Finally OPT is the number of solved instances in the class. The means are computed over all the instances of each class.

\subsection{Single-item lot-sizing}
The five instance classes are:
\begin{itemize}
	\item C1LS : $d_{\text{avg}} = 1000$, $\delta = 100$, $\theta \in [0.8,1]$, $\lambda = 3$, $T=40$
	\item C2LS : $d_{\text{avg}} = 1000$, $\delta = 500$, $\theta \in [0.4,0.6]$, $\lambda = 3$,$T=40$
	\item C3LS : $d_{\text{avg}} = 1000$, $\delta = 100$, $\theta  \in [0.8,1]$, $\lambda = 3$, $T=80$
	\item C4LS : $d_{\text{avg}} = 1000$, $\delta = 500$, $\theta \in [0.4,0.6]$, $\lambda = 3$, $T=80$
	\item C5LS : $d_{\text{avg}} = 1000$, $\delta = 50$, $\theta = 0.5$, $\lambda = 3$, $T=40$
\end{itemize}

The results are presented in Tables \ref{lsCPDP} and \ref{lsMILP}.

\begin{table}[!h]
\footnotesize
\begin{center}
\begin{tabular}{|l|c|c|c|c|c|c|c|c|c|c|}
\hline
&  \multicolumn{4}{c|}{CP\_Basic} &  \multicolumn{4}{c|}{CP\_LS} & \multicolumn{2}{c|}{DP} \\
\hline
 Class  & NODE & CPU & RNB & OPT & NODE & CPU & RNB & OPT & CPU & OPT \\
\hline
C1LS	&	8.3E+06	&	200	&	100\%	&	0	&	\textbf{1}	&	1.1	&	\textbf{0}\%	&	10	&	0.2	&	10	\\
\hline																					
C2LS	&	1.7E+06	&	200	&	65\%	&	0	&	\textbf{1}	&	1.0	&	\textbf{0}\%	&	10	&	\textbf{0.2}	&	10	\\
\hline																					
C3LS	&	6.5E+06	&	200	&	100\%	&	0	&	\textbf{28}	&	2.1	&	\textbf{0}\%	&	10	&	\textbf{0.4}	&	10	\\
\hline																					
C4LS	&	3.4E+05	&	200	&	64\%	&	0	&	\textbf{35}	&	2.2	&	\textbf{0}\%	&	10	&	\textbf{0.4}	&	10	\\
\hline																					
C5LS	&	8.5E+06	&	200	&	56\%	&	0	&	\textbf{1}	&	1.1	&	\textbf{0}\%	&	10	&	\textbf{0.2}	&	10	\\
\hline																					

\end{tabular}
\normalsize
\caption{\label{lsCPDP} Single-item lot-sizing - CP and DP}
\end{center}
\end{table}

\begin{table}[!h]
\footnotesize
\begin{center}
\begin{tabular}{|l|c|c|c|c|c|c|c|c|c|c|}
\hline
&  \multicolumn{5}{c|}{MILP\_AGG}& \multicolumn{5}{c|}{MILP\_UFL} \\
\hline
 Class  & NODE & CPU & RNB & LR & OPT & NODE & CPU & RNB & LR & OPT  \\
\hline

C1LS	&	580	&	\textbf{0.1}	&	1\%	&	10\%	&	10	&	460	&	0.6	&	1\%	&	\textbf{3}\%	&	10	\\
\hline																					
C2LS	&	1360	&	\textbf{0.2}	&	2\%	&	10\%	&	10	&	1643	&	1.4	&	2\%	&	\textbf{3}\%	&	10	\\
\hline																					
C3LS	&	3213	&	1.7	&	2\%	&	11\%	&	10	&	13109	&	54.1	&	3\%	&	\textbf{3}\%	&	10	\\
\hline																					
C4LS	&	2222	&	1.6	&	1\%	&	10\%	&	10	&	14366	&	61.6	&	2\%	&	\textbf{2}\%	&	10	\\
\hline																					
C5LS	&	1691	&	0.3	&	2\%	&	11\%	&	10	&	9336	&	4.5	&	2\%	&	\textbf{3}\%	&	10	\\
\hline

\end{tabular}
\normalsize
\caption{\label{lsMILP} Single-item lot-sizing - MILP}
\end{center}
\end{table}

These tables show that:
\begin{itemize}
\item As expected, the basic CP model has a very large search space as it does not propagate any strong reasoning. We therefore did not use CP\_Basic for the following results.
\item As the upper bound provided is optimal and there is no upper bound on $\mathit{Cp}$,$\mathit{Ch}$ and $\mathit{Cs}$, CP\_LS achieves AC at the root node and branches backtrack free towards an optimal solution. 
\item The linear relaxation of MILP\_AGG is not as good as the MILP\_UFL's as it was expected due to the setup constraints \eqref{setupMILP1}. CPLEX however provides a better root node lower bound for MILP\_AGG. MILP\_UFL is not as competitive as MILP\_AGG because of the number of variables and constraints.
We therefore did not use MILP\_UFL for the following results.
\end{itemize}

\subsection{Scaling the global constraint}
We then test the WISP support filtering described in \ref{scalingFiltering} when the DP has memory issues. In order to generate memory issues for the DP, we add high consumption peaks in the instances. The peaks are added in periods 6 to 9, 12 to 15, 22 to 25 and 32 to 36 and correspond to demands of 50,000. When computing the global lower bound with the WISP, no sub-problem containing a demand peak is solved via dynamic programming since the peaks increase the complexity of the DP. The lower bound on these sub-problems is therefore their linear relaxation. 
The five instance classes have the following parameters:
\begin{itemize}
	\item C1Peaks : $d_{\text{avg}} = 100$, $\delta = 50$, $\theta \in [0.8,1]$, $\lambda = 4$, $T=40$
	\item C2Peaks : $d_{\text{avg}} = 100$, $\delta = 50$, $\theta \in [0.4,0.6]$, $\lambda = 4$,$T=40$
	\item C3Peaks : $d_{\text{avg}} = 100$, $\delta = 50$, $\theta  = 0.5$, $\lambda = 4$, $T=40$
	\item C4Peaks : $d_{\text{avg}} = 100$, $\delta = 20$, $\theta \in [0.8,1]$, $\lambda = 4$, $T=40$
	\item C5Peaks : $d_{\text{avg}} = 100$, $\delta = 20$, $\theta \in [0.4,0.6]$, $\lambda = 4$, $T=40$
\end{itemize}
The branching heuristic is adapted to select first the setup variables of the high demand periods. Table \ref{lsScaling} compares the three models CP\_LS, MILP\_AGG and DP on these big instances.
\begin{table}[!h]
\footnotesize
\begin{center}
\begin{tabular}{|l|c|c|c|c|c|c|c|c|c|c|c|}
\hline
&  \multicolumn{4}{c|}{CP\_LS} &  \multicolumn{5}{c|}{MILP\_AGG} & \multicolumn{2}{c|}{DP} \\
\hline
 Class  & NODE & CPU & RNB & OPT & NODE & CPU & RNB & LR & OPT & CPU & OPT \\
\hline
C1Peaks	&	125	&	1.1	&	2\%	&	10	&	\textbf{0}	&	\textbf{0.0}	&	\textbf{0}\%	&	13\%	&	10	&	23.8	&	10	\\
\hline																							
C2Peaks	&	468	&	5.7	&	2\%	&	10	&	\textbf{0}	&	\textbf{0.0}	&	\textbf{0}\%	&	6\%	&	10	&	23.4	&	10	\\
\hline																							
C3Peaks	&	2784	&	21.0	&	3\%	&	10	&	\textbf{1}	&	\textbf{0.0}	&	\textbf{0}\%	&	7\%	&	10	&	22.6	&	10	\\
\hline																							
C4Peaks	&	408	&	3.7	&	3\%	&	10	&	\textbf{0}	&	\textbf{0.0}	&	\textbf{0}\%	&	15\%	&	10	&	23.4	&	10	\\
\hline																							
C5Peaks	&	446	&	5.5	&	2\%	&	10	&	\textbf{0}	&	\textbf{0.0}	&	\textbf{0}\%	&	7\%	&	10	&	24.0	&	10	\\
\hline																					
\end{tabular}
\normalsize
\caption{\label{lsScaling} Scaling the global constraint}
\end{center}
\end{table}

This table shows that:
\begin{itemize}
\item The filtering is lighter, hence the root node lower bound gap increases as well as the number of nodes.
\item The resolution is however faster than the DP.
\item Although the linear relaxation degrades, CPLEX pre-processing behaves very well as shown by the root node lower bound.
\end{itemize} 

\section{Single-item lot-sizing with side constraints}
\label{numResultsAC}
We consider the single-item lot-sizing problem $(L)$ with three side constraints (domain disjunction, limited production rate and a combination of the two). The instances created here are generated the same way as before and we added leveled production and/or constrained production rate. For the following tests, we compared only CP\_LS to MILP\_AGG and to DP when it is relevant.

\subsection{Disjunctive production constraints}
We consider here that the production is leveled. The domains of each variable $X_t$ is defined by a disjunction of $n_t$ integer intervals $ K_k = \llbracket \underline{K_k}, \overline{K_k} \rrbracket, \: \forall \: k \in \llbracket 1 , n_t \rrbracket$:
$$ D(X_t) = \{0\} \cup K_1  \cup \ldots \cup K_{n_t}$$

DPLS can take into account the disjunctions without any loss of complexity. We add the following constraints to the MILP model MILP\_AGG:
\begin{align}
X_t &= \sum_{k=1}^{n_t} {X_t^k} &&\forall \: t = 1 \ldots T \label{prodMILPDisj}\\
Y_t &= \sum_{k=1}^{n_t} {Y_t^k} &&\forall \: t = 1 \ldots T \label{setupMILPDisj0}\\
X_t^k &\leq Y_t^k \overline{K_k} &&\forall \: t = 1 \ldots T, k = 1 \ldots n_t\label{setupMILPDisj1}\\
Y_t^k \underline{K_k} & \leq X_t^k &&\forall \: t = 1 \ldots T, k = 1 \ldots n_t\label{setupMILPDisj2}
\end{align}

The ten classes for this problem are:
\begin{itemize}
	\item C1Disj : $d_{\text{avg}} = 100$, $\delta = 50$, $\theta \in [0.8,1]$, $\lambda = 5$, $T=40$
	\item C2Disj : $d_{\text{avg}} = 100$, $\delta = 60$, $\theta \in [0.4,0.6]$, $\lambda = 5$,$T=40$
	\item C3Disj : $d_{\text{avg}} = 100$, $\delta = 70$, $\theta  \in [0.3,0.8]$, $\lambda = 5$, $T=40$
	\item C4Disj : $d_{\text{avg}} = 100$, $\delta = 30$, $\theta \in [0.6,1]$, $\lambda = 5$, $T=40$
	\item C5Disj : $d_{\text{avg}} = 100$, $\delta = 50$, $\theta \in [0.9,1]$, $\lambda = 5$, $T=40$
\end{itemize}
We generated C6Disj, C7Disj, C8Disj, C9Disj and C10Disj that have the same parameters than the five instances above, but with $T=80$. The disjunctions are added as follows: $D(X_t) = \llbracket 0, 30\rrbracket \cup \llbracket 100, 150\rrbracket \cup \llbracket 200, 240\rrbracket$. Table \ref{lsDisj} gives the numerical results for the single-item lot-sizing with disjunctions.

\begin{table}[!h]
\footnotesize
\begin{center}
\begin{tabular}{|l|c|c|c|c|c|c|c|c|c|c|c|}
\hline
&  \multicolumn{4}{c|}{CP\_LS} &  \multicolumn{5}{c|}{MILP\_AGG} & \multicolumn{2}{c|}{DP} \\
\hline
 Class  & NODE & CPU & RNB & OPT & NODE & CPU & RNB & LR & OPT & CPU & OPT \\
\hline
C1Disj	&	\textbf{1}	&	\textbf{0.0}	&	\textbf{0}\%	&	10	&	6.4E+05	&	162.1	&	43\%	&	52\%	&	2	&	\textbf{0.0}	&	10	\\
\hline																							
C2Disj	&	\textbf{2}	&	\textbf{0.0}	&	\textbf{0}\%	&	10	&	1.4E+04	&	6.0	&	27\%	&	38\%	&	10	&	\textbf{0.0}	&	10	\\
\hline																							
C3Disj	&	\textbf{2}	&	\textbf{0.0}	&	\textbf{0}\%	&	10	&	1.6E+03	&	0.5	&	27\%	&	38\%	&	10	&	\textbf{0.0}	&	10	\\
\hline																							
C4Disj	&	\textbf{2}	&	\textbf{0.0}	&	\textbf{0}\%	&	10	&	2.2E+04	&	6.3	&	39\%	&	48\%	&	10	&	\textbf{0.0}	&	10	\\
\hline																							
C5Disj	&	\textbf{1}	&	\textbf{0.0}	&	\textbf{0}\%	&	10	&	9.6E+05	&	200.0	&	52\%	&	61\%	&	0	&	\textbf{0.0}	&	10	\\
\hline																							
C6Disj	&	\textbf{2}	&	0.1	&	\textbf{0}\%	&	10	&	3.2E+05	&	200.0	&	43\%	&	52\%	&	0	&	\textbf{0.0}	&	10	\\
\hline																							
C7Disj	&	\textbf{2}	&	0.1	&	\textbf{0}\%	&	10	&	5.0E+04	&	38.3	&	27\%	&	39\%	&	10	&	\textbf{0.0}	&	10	\\
\hline																							
C8Disj	&	\textbf{1}	&	0.1	&	\textbf{0}\%	&	10	&	5.7E+03	&	4.5	&	28\%	&	39\%	&	10	&	\textbf{0.0}	&	10	\\
\hline																							
C9Disj	&	\textbf{2}	&	0.1	&	\textbf{0}\%	&	10	&	3.6E+04	&	21.9	&	38\%	&	48\%	&	10	&	\textbf{0.0}	&	10	\\
\hline																							
C10Disj	&	\textbf{2}	&	0.1	&	\textbf{0}\%	&	10	&	3.3E+05	&	200.0	&	52\%	&	60\%	&	0	&	\textbf{0.0}	&	10	\\
\hline																							
\end{tabular}
\normalsize
\caption{\label{lsDisj} Single-item lot-sizing with disjunctions}
\end{center}
\end{table}
Table \ref{lsDisj} shows that the CP and DP models are very fast to solve these instances. The property described in \ref{dominance} concerning the setup variables is not valid for this problem: indeed the flow with disjunctions is not polynomial. However as \textsc{LotSizing}'s filtering uses the DP, the global constraint can handle disjunctions on the domains of the production variables. Therefore CP\_LS achieves AC at the root node and branches backtrack free towards an optimal solution. Unsurprisingly we note that the MILP model does not handle these disjunction constraints well.

\subsection{Q/R constraints}
Q/R constraints are interesting side constraints for single-item lot-sizing problems \cite{hellion2014,hellion2015}. They relate to the production rate and state that, given two integers $Q$ and $R$, there must be at least $Q$ and at most $R$ periods between two consecutive productions.
Dynamic programming rapidly gets memory issues here, as the states should take into account what happened at least $R$ periods before. The Q/R constraints can be modeled by two \textsc{Sequence} constraints stated as follows:
\begin{align*}
&\textsc{Sequence}(0,1,Q+1, [ Y_1, \ldots, Y_T ] , \{1\})\\
&\textsc{Sequence}(1, R+1 ,R+1, [ Y_1, \ldots, Y_T ] , \{1\})
\end{align*}
The \textsc{Sequence} constraint is defined as follows \cite{beldiceanu2001}:
$\textsc{Sequence}(l,u,k, [ Z_1, \ldots, Z_n ] , v)$ holds if and only if:
 $$\forall \: 1 \leq i \leq n-k+1 \: \: \: \: l \leq | \{i \: | \: Z_i \in v\}| \leq u$$
We add the following constraints to the model MILP\_AGG:
\begin{align}
\sum_{t=u}^{v} {Y_t} & \leq 1 && \forall \: u,v \in \llbracket 1, T \rrbracket \: \text{s.t.} \: v-u+1 = Q+1 \label{setupMILPQR0}\\
\sum_{t=u}^{v} {Y_t} & \geq 1 && \forall \: u,v \in \llbracket 1, T \rrbracket \: \text{s.t.} \: v-u+1 = R+1 \label{setupMILPQR1}
\end{align}
We add the $\#_{uv}$ variables that count the number of effective production periods between period $u$ and period $v$ included:
\begin{align}
\#_{uv} & = \sum_{t=u}^{v} {Y_t} && \forall \: u,v \in \llbracket 1, T \rrbracket  \label{carduvCP1}
\end{align}
These variables enable us to use the encoding of \textsc{Sequence} presented in \cite{brand2007} to propagate the Q/R constraints. We also add the useful following redundant constraints:
\begin{align}
\#_{1t} + \#_{t+1 T} &= \#_{1T} && \forall \: t \in \llbracket 2, T-1 \rrbracket \label{carduvCP3} \\
\#_{1t} + Y_{t+1} &= \#_{1 t+1} && \forall \: t \in \llbracket 2, T-1 \rrbracket \label{carduvCP4}
\end{align}

The ten classes (C1QR, $\ldots$, C10QR) for this problem have the same parameters than C1Disj, $\ldots$, C10Disj to which we add (Q=2, R=6) for classes 1, 2, 3, 6, 7, 8 and (Q=3, R=7) for classes 4, 5, 9, 10.
Table \ref{lsQR} compares CP\_LS to MILP\_AGG on the instances with Q/R.

\begin{table}[!h]
\footnotesize
\begin{center}
\begin{tabular}{|l|c|c|c|c|c|c|c|c|c|}
\hline
&  \multicolumn{4}{c|}{CP\_LS} &  \multicolumn{5}{c|}{MILP\_AGG}\\
\hline
 Class  & NODE & CPU & RNB & OPT & NODE & CPU & RNB & LR & OPT \\
\hline
C1QR	&	\textbf{2}	&	0.3	&	\textbf{0}\%	&	10	&	17	&	0.0	&	1\%	&	14\%	&	10	\\
\hline																			
C2QR	&	\textbf{22}	&	0.2	&	\textbf{1}\%	&	10	&	56	&	0.1	&	\textbf{1}\%	&	13\%	&	10	\\
\hline																			
C3QR	&	84	&	0.4	&	\textbf{1}\%	&	10	&	\textbf{27}	&	0.0	&	\textbf{1}\%	&	12\%	&	10	\\
\hline																			
C4QR	&	\textbf{1}	&	0.3	&	\textbf{0}\%	&	10	&	20	&	0.0	&	1\%	&	12\%	&	10	\\
\hline																			
C5QR	&	\textbf{1}	&	0.4	&	\textbf{0}\%	&	10	&	13	&	0.0	&	1\%	&	16\%	&	10	\\
\hline																			
C6QR	&	\textbf{772}	&	7.2	&	\textbf{1}\%	&	10	&	943	&	0.5	&	2\%	&	16\%	&	10	\\
\hline																			
C7QR	&	6488	&	42.8	&	\textbf{0}\%	&	10	&	\textbf{601}	&	0.4	&	1\%	&	14\%	&	10	\\
\hline																			
C8QR	&	26716	&	134.1	&	\textbf{1}\%	&	5	&	\textbf{392}	&	0.3	&	\textbf{1}\%	&	13\%	&	10	\\
\hline																			
C9QR	&	\textbf{1}	&	1.1	&	\textbf{0}\%	&	10	&	510	&	0.3	&	1\%	&	15\%	&	10	\\
\hline																			
C10QR	&	\textbf{21}	&	1.9	&	\textbf{0}\%	&	10	&	1175	&	0.5	&	3\%	&	18\%	&	10	\\
\hline

\end{tabular}
\normalsize
\caption{\label{lsQR} Single-item lot-sizing with Q/R}
\end{center}
\end{table}

The linear relaxation and root node lower bound of MILP\_AGG has slightly worsened without degrading the performance of the model. CP\_LS stays competitive on most of the instances.

\subsection{Disjunctive with Q/R constraints}
We add both Q/R and disjunctive production constraints. The problem cannot be tackled via DP due to the Q/R constraints, hence we compared CP\_LS to MILP\_AGG. The instances have the same parameters than before with both the disjunctions and the Q/R parameters presented for the latter problems. The results are shown in table \ref{lsQRDisj}.

\begin{table}[!h]
\footnotesize
\begin{center}
\begin{tabular}{|l|c|c|c|c|c|c|c|c|c|}
\hline
&  \multicolumn{4}{c|}{CP\_LS} &  \multicolumn{5}{c|}{MILP\_AGG}\\
\hline
 Class  & NODE & CPU & RNB & OPT & NODE & CPU & RNB & LR & OPT \\
\hline
C1DijsQR	&	\textbf{1}	&	\textbf{0.3}	&	\textbf{0}\%	&	10	&	1808	&	0.8	&	7\%	&	19\%	&	10	\\
\hline																			
C2DijsQR	&	\textbf{6}	&	\textbf{0.2}	&	\textbf{0}\%	&	10	&	637	&	0.4	&	2\%	&	14\%	&	10	\\
\hline																			
C3DijsQR	&	\textbf{67}	&	\textbf{0.4}	&	\textbf{1}\%	&	10	&	710	&	\textbf{0.4}	&	2\%	&	13\%	&	10	\\
\hline																			
C4DijsQR	&	\textbf{3}	&	0.4	&	\textbf{0}\%	&	10	&	964	&	\textbf{0.3}	&	5\%	&	16\%	&	10	\\
\hline																			
C5DijsQR	&	\textbf{30}	&	0.5	&	\textbf{1}\%	&	10	&	209	&	\textbf{0.2}	&	12\%	&	25\%	&	10	\\
\hline																			
C6DijsQR	&	\textbf{669}	&	\textbf{4.7}	&	\textbf{0}\%	&	10	&	40150	&	53.4	&	5\%	&	19\%	&	10	\\
\hline																			
C7DijsQR	&	\textbf{3471}	&	17.8	&	\textbf{0}\%	&	10	&	4839	&	\textbf{9.2}	&	2\%	&	15\%	&	10	\\
\hline																			
C8DijsQR	&	22386	&	94.0	&	\textbf{1}\%	&	7	&	\textbf{4618}	&	\textbf{7.4}	&	3\%	&	14\%	&	10	\\
\hline																			
C9DijsQR	&	\textbf{53}	&	\textbf{1.5}	&	\textbf{0}\%	&	10	&	5066	&	8.8	&	4\%	&	17\%	&	10	\\
\hline																			
C10DijsQR	&	\textbf{7}	&	\textbf{2.6}	&	\textbf{1}\%	&	10	&	1663	&	1.7	&	11\%	&	26\%	&	10	\\
\hline
\end{tabular}
\normalsize
\caption{\label{lsQRDisj} Single-item lot-sizing with disjunctives and Q/R}
\end{center}
\end{table}
On some classes, CP\_LS does not solve all the instances yet is competitive compared to MILP\_AGG and has a near optimal root node lower bound.\\

We summarize below the main conclusions of our numerical study that hold for the set of instances under consideration.

For the single-item problem:
\begin{itemize}
\item CP\_Basic fails to find the optimal solution in a reasonable time due to the size of the search space and the lack of pruning. As no information is given on the costs, its root node lower bound is very far from the optimal value.
\item CP\_LS achieves arc consistency at the root node since it is given the optimal upper bound and is a competitive approach to find the optimal solution. It still has to branch when there exist multiple optimal solutions (see for instance classes C3LS and C4LS). CP\_LS can outscale DP on instances with demand peaks. The AC is not achieved at the root node since the scaling is based on a relaxation of the filtering mechanisms.
\item DP and  MILP\_AGG are the most competitive approaches.
\item MILP\_UFL finds the best RNB of the two MILP models but is slower overall.
\end{itemize}

For the single-item problem with additional constraints:
\begin{itemize}
\item Disjunctive constraints: CP\_LS and DP outperform MILP which does not handle well combinatorial constraints.
\item Q/R constraints: CP\_LS stays competitive on most instances while DP is not a suitable approach.
\item Q/R and Disjunctive constraints: CP\_LS is the fastest on most instance classes. As for C7 and C8 classes, note that MILP\_AGG was not very  troubled by the disjunctive constraints (Table 5) nor by the Q/R constraints (Table 6) unlike CP\_LS. Hence the difficulty of CP\_LS to rapidly solve these instances (Table 7).
\end{itemize}

\section{Conclusion}
In this paper, we defined a global constraint \textsc{LotSizing} for a capacitated single-item lot-sizing problem. 
Firstly, we presented a new  lower bound for this problem, based on a new decomposition of the problem into sub-problems. Secondly, we formally introduced our constraint and gave some complexity results. Thirdly, we developed  filtering rules for the \textsc{LotSizing} global constraint based on dynamic programming. Finally, we presented a proof of concept for the filtering of the constraint via several numerical results. We can conclude that our approach based on constraint programming  can yield interesting and competitive results for lot-sizing problems with side constraints. We however want to point out the limits of our numerical study that lie in the small size and variability of the set of instances that we chose for these first tests of the \textsc{LotSizing} global constraint.\\

The next step of this work will be to use the \textsc{LotSizing} global constraint as a building block to tackle multi-item and multi-echelon  problems. In multi-item, each item can be modeled as a \textsc{LotSizing} constraint and each of them infers on the feasibility and costs of how to produce its item. They however share some variables if for instance we consider shared setup costs, the setup variables would be shared among the \textsc{LotSizing} constraints, communicating information. In multi-level problems, each retailer can be modeled as a \textsc{LotSizing} constraint. The global constraints are linked since the input of a retailer (production variables) is the output of the previous retailer (demands). The constraint programming framework built around the \textsc{LotSizing} global constraint might very well benefit from several work that has been done on the relaxation of these types of problems \cite{van2014relaxations,zhang2012polyhedral}.

\bibliographystyle{abbrv}
\bibliography{biblio}

\end{document}